\shorttitle{CLT and LIL for Biggins martingale} 
\DeclareMathOperator{\R}{\mathbb{R}}
\DeclareMathOperator{\V}{\mathbb{V}}
\DeclareMathOperator{\Prob}{\mathbb{P}}
\DeclareMathOperator{\Var}{Var}
\DeclareMathOperator{\Cov}{Cov}
\DeclareMathOperator{\me}{\mathbb{E}}
\DeclareMathOperator{\F}{\mathcal{F}}
\DeclareMathOperator{\1}{\mathbbm{1}}
\DeclareMathOperator{\D}{\mathfrak{D}}
\newcommand{\toasw}{\overset{a.s.w.}{\underset{n\to\infty}\longrightarrow}}
\newcommand{\todistr}{\overset{d}{\underset{n\to\infty}\longrightarrow}}
\newcommand{\toweak}{\overset{w}{\underset{n\to\infty}\longrightarrow}}
\newcommand{\M}{\mathbb{M}}
\newcommand{\mm}{\mathcal{Z}}
\newcommand{\mn}{\mathbb{N}}
\newcommand{\mr}{\mathbb{R}}
\newcommand{\lin}{\underset{n\to\infty}{\lim}}
\newcommand{\eps}{\varepsilon}
\newcommand{\eee}{{\rm e}}
\numberwithin{equation}{section}  
\begin{document}

\title{A central limit theorem and a law of the iterated logarithm for the Biggins martingale of the supercritical branching random walk} 

\authorone[Taras Shevchenko National University of Kyiv]{Alexander Iksanov} 
\authortwo[Westf\"{a}lische Wilhelms-Universit\"{a}t M\"{u}nster]{Zakhar Kabluchko}
\addressone{Faculty of Cybernetics, Taras Shevchenko National University of Kyiv, 01601 Kyiv, Ukraine} 
\addresstwo{Institut f\"{u}r Mathematische Statistik, Westf\"{a}lische Wilhelms-Universit\"{a}t M\"{u}nster, 48149 M\"{u}nster, Germany}
\begin{abstract}
Let $(W_n(\theta))_{n\in\mn_0}$ be the Biggins martingale associated with a supercritical branching random walk and denote by $W_\infty(\theta)$ its limit. Assuming essentially that the martingale $(W_n(2\theta))_{n\in\mn_0}$ is uniformly integrable and that $\Var W_1(\theta)$ is finite, we prove a functional central limit theorem for the tail process $(W_\infty(\theta) - W_{n+r}(\theta))_{r\in\mn_0}$  and
a law of the iterated logarithm for  $W_\infty(\theta)-W_n(\theta)$, as $n\to\infty$.
\end{abstract}

\keywords{Biggins martingale; branching random walk; central limit theorem; law of the iterated logarithm} 

\ams{60G42}{60J80} 

\section{Introduction and main results} \label{sec:Intro_and_main_results} 


\subsection{Introduction}
For several models of spin glasses it is known that the log-partition function has asymptotically Gaussian fluctuations in the high temperature regime. This was shown for the Sherrington--Kirkpatrick model in~\cite{Aizenman+Lebowitz+Ruelle:1987}, for the Random Energy Model and the $p$-spin model in~\cite{Bovier+Kurkova+Loewe:2002}, and for the Generalized Random Energy Model in~\cite{Kabluchko+Klimovsky:2014}, to give just an incomplete list of examples.  We are interested in the Biggins martingale $W_n(\theta)$ associated with a supercritical branching random walk (BRW), to be defined below.
With regard to the strength of its correlations, the branching random walk is located between the Random Energy Model and the Sherrington--Kirkpatrick model. Also, it can be thought of as a limiting case of the Generalized Random Energy Model. Since in all the three aforementioned models the log-partition function exhibits asymptotically Gaussian fluctuations at high temperatures, it is natural to expect that the branching random walk behaves similarly.
However, in the high-temperature regime (meaning that $\theta$ is small), the Biggins martingale $W_n(\theta)$ is, under appropriate conditions, uniformly integrable and converges almost surely (a.s.)\ to a limit $W_\infty(\theta)$ which is non-Gaussian. It follows that we cannot obtain a  Gaussian limit distribution whatever deterministic affine normalization we apply to $W_n(\theta)$.

In the present paper we prove a functional central limit theorem
(functional CLT) for the Biggins martingale $W_n(\theta)$ and its
logarithm under a natural \emph{random} centering. We also derive
a law of the iterated logarithm which complements the central
limit theorem.

Let us recall the definition of the branching random walk.  At
time $n=0$ consider an individual, the ancestor, located at the
origin of the real line.  At time $n=1$ the ancestor produces
offspring (the first generation) according to a point process $\mm
= \sum_{i=1}^J \delta_{X_i}$ on $\R$. The number of offspring, $J
= \mm(\R)$, is a random variable which is explicitly allowed to be
infinite with positive probability. The first generation produces
the second generation whose displacements with respect to their
mothers are distributed according to independent copies of the
same point process $\mm$.  The second generation produces the
third one, and so on. All individuals act independently of each
other.

More formally, let $\V=\cup_{n\in\mn_0}\mn^n$ be the set of all
possible individuals. The ancestor is identified with the empty
word $\varnothing$ and its position is $S(\varnothing)=0$. On some
probability space $(\Omega, \F, \Prob)$ let $(\mm(u))_{u \in \V}$
be a family of independent identically distributed (i.i.d.)\
copies of the point process $\mm$. An individual $u = u_1\ldots
u_n$ of the $n$th generation whose position on the real line is
denoted by $S(u)$ produces at time $n+1$ a random number $J(u)$ of
offspring which are placed at random locations on $\R$ given by
the positions of the point process $\sum_{i=1}^{J(u)} \delta_{S(u) + X_i(u)}$
where $\mm(u) =
\sum_{i=1}^{J(u)} \delta_{X_i(u)}$ and $J(u)$ is the number of points in $\mm(u)$. The offspring of the individual $u$ are
enumerated by $ui = u_1 \ldots u_n i$, where $i=1,\ldots,J(u)$ (if $J(u)<\infty$) or $i=1,2,\ldots$ (if $J(u)=\infty$), and the positions of the offspring are
denoted by $S(ui)$.  Note that no assumptions are imposed on the dependence structure of the random variables $J(u), X_1(u),X_2(u),\ldots$ for fixed $u\in\V$. The point process of
the positions of the $n$th generation individuals will be denoted
by $\mm_n$ so that $\mm_0=\delta_0$ and
$$
\mm_{n+1} = \sum_{|u|=n} \sum_{i=1}^{J(u)} \delta_{S(u)+X_i(u)},
$$
where, by convention, $|u|=n$ means that the sum is taken over all individuals of the $n$th generation rather than over all $u\in\mn^n$.
The sequence of point processes $(\mm_n)_{n \in
\mn_0}$ is then called a \emph{branching random walk} (BRW).

Throughout the paper, we assume that the BRW is \emph{supercritical}, that is $\me J>1$. In this case, the event $\mathcal{S}$ that the population survives has positive probability: $\Prob[\mathcal{S}]>0$.
Note that, provided that $J<\infty$ a.s.,\ the sequence $(\mm_n(\mr))_{n\in\mn_0}$ of generation
sizes in the BRW forms a Galton--Watson process.

An important tool in the analysis of the BRW is the Laplace transform
of the intensity measure $\mu := \me \mm$ of the point process $\mm$,
\begin{equation*}    \label{eq:m}
m: \R \to [0,\infty],   \qquad
\theta \mapsto \int_{\R} \eee^{-\theta x} \, \mu({\rm d} x) = \me \Bigg[ \int_{\R} \eee^{-\theta x} \,\mm({\rm d} x)\Bigg].
\end{equation*}
We make the standing assumption that $m(\gamma)< \infty$ for at least one $\gamma\in\R$, that is
$$
\D(m) := \{\theta \in \R\colon  m(\theta) < \infty\} \neq \varnothing.
$$
For $\gamma\in \D(m)$ define
\begin{equation*}\label{eq:W_n}
W_n(\gamma) :=    \frac1 {(m(\gamma))^n} \int_{\R} \eee^{-\gamma x} \,\mm_n({\rm d} x)
=
\frac 1 {(m(\gamma))^n}\sum_{|u|=n} (Y_u)^\gamma,
\quad
n\in\mn_0,
\end{equation*}
where $Y_u:=\eee^{-S(u)}$, and we recall that $S(u)$ is the position of the individual $u\in \V$. Let $\F_n$ be the
$\sigma$-field generated by the first $n$ generations of the BRW,
i.e. $\F_n=\sigma\{\mm(u)\colon  |u|<n\}$, where $|u|<n$ means that $u \in \mn^k$ for some $k<n$.
It is well-known and easy to check that, for every $\gamma\in \D(m)$, the sequence $(W_n(\gamma))_{n \in\mn_0}$
forms a non-negative martingale with respect to the filtration $(\F_n)_{n\in \mn_0}$ and thus converges a.s.\ to a
random variable which is denoted by $W_\infty(\gamma)$ and satisfies $\me W_\infty(\gamma)\leq 1$. This martingale is called the {\it Biggins martingale} or the {\it intrinsic martingale in the BRW}.
Possibly after the transformation $X_{i} \mapsto \gamma X_{i} +
\log m(\gamma)$ it is no loss of generality to assume that $\gamma=1$
and that
\begin{equation*}    \label{eq:m(1)=1}
m(1)    = \me \Bigg[\int_{\R} \eee^{-x} \,\mm({\rm d} x)\Bigg]   = \me \Bigg[\sum_{i=1}^J \eee^{-X_i}\Bigg]  = 1.
\end{equation*}

\subsection{Central limit theorem}\label{cltres}
Let $\R^{\infty}$ be the space of infinite sequences $x = (x_0,x_1,x_2,\ldots)$ with $x_j\in\R$ for all $j\in\mn_0$. Endow $\R^{\infty}$ with a complete, separable metric
$$
\rho(x,y) = \sum_{j=0}^{\infty} 2^{-j} \frac{|x_j-y_j|}{1+|x_j-y_j|},
\quad
x,y\in \R^{\infty}
$$
which metrizes the pointwise convergence.
\begin{thm}\label{main2}
Suppose that $m(1)=1$, $\sigma^2 := \Var W_1(1) < \infty$ and $m(2)<1$.  Then,
\begin{equation}\label{clt}
\left( \frac{W_\infty(1) -
W_{n+r}(1)}{(m(2))^{(n+r)/2}}\right)_{r\in \mn_0} \toweak
\left(\sqrt{v^2 W_\infty(2)} \, U_r\right)_{r\in\mn_0}
\end{equation}
weakly on $\R^{\infty}$, where $v^2:= \Var W_\infty(1) = \sigma^2(1-m(2))^{-1}$, and  $(U_r)_{r\in\mn_0}$ is a stationary zero-mean Gaussian sequence which is independent of $W_\infty(2)$ and has the covariance function
$$
\Cov (U_r, U_s) =   (m(2))^{|r-s|/2}, \quad r,s\in\mn_0.
$$
\end{thm}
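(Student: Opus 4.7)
The plan is to prove \eqref{clt} at the level of finite-dimensional distributions (f.d.d.); since $\rho$ metrizes the product topology on $\R^\infty$, f.d.d.\ convergence is equivalent to the claimed weak convergence. My approach is a conditional central limit theorem with respect to $\F_n$, based on the branching decomposition at generation~$n$. Namely, for every $r\in\mn_0$,
\begin{equation*}
W_\infty(1) - W_{n+r}(1) = \sum_{|u|=n} Y_u \bigl(W_\infty^{(u)}(1) - W_r^{(u)}(1)\bigr),
\end{equation*}
where $\{(W_\infty^{(u)}(1)-W_r^{(u)}(1))_{r\in\mn_0}\colon |u|=n\}$ are i.i.d.\ copies of $(W_\infty(1)-W_r(1))_{r\in\mn_0}$, independent of $\F_n$. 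Using the orthogonality of martingale increments of $(W_k(1))_k$, a short computation gives $\me[(W_\infty(1)-W_r(1))(W_\infty(1)-W_s(1))] = v^2 m(2)^{r\vee s}$, so that $T_r^{(u)} := (W_\infty^{(u)}(1)-W_r^{(u)}(1))/m(2)^{r/2}$ has the target covariance $\me T_r T_s = v^2 m(2)^{|r-s|/2}$.

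Fix $0\le r_1<\cdots<r_k$ and set $a_u := Y_u/m(2)^{n/2}$. The left-hand side of \eqref{clt} at $(r_1,\ldots,r_k)$ then reads $\sum_{|u|=n} a_u(T_{r_1}^{(u)},\ldots,T_{r_k}^{(u)})$: a weighted sum of $\R^k$-valued, i.i.d., centered random vectors with covariance $\Sigma_{ij}:=v^2 m(2)^{|r_i-r_j|/2}$, independent of the $\F_n$-measurable weights~$a_u$. A conditional vector Lindeberg--Feller CLT given $\F_n$ applies once I verify, almost surely, that (i) $\sum_u a_u^2 = W_n(2) \to W_\infty(2)$, which follows from the a.s.\ convergence of the non-negative Biggins martingale at $\gamma=2$; and (ii) $\max_u a_u^2 = m(2)^{-n}\max_{|u|=n}Y_u^2 \to 0$. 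Given (i)--(ii), the Lindeberg condition for the triangular array $\{a_u(T_{r_1}^{(u)},\ldots,T_{r_k}^{(u)})\}_{|u|=n}$ follows from $\me\|(T_{r_1},\ldots,T_{r_k})\|^2<\infty$ via dominated convergence, so the conditional characteristic function converges in probability to $\exp(-\tfrac12 W_\infty(2)\lambda^\top\Sigma\lambda)$. Since $|\,\cdot\,|\le 1$, bounded convergence delivers the unconditional limit, namely the characteristic function of $\sqrt{v^2 W_\infty(2)}\,(U_{r_1},\ldots,U_{r_k})$ with $(U_r)$ as in the statement and independent of $W_\infty(2)$.

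The main obstacle is (ii): showing that the maximal generation-$n$ weight $m(2)^{-n/2}\max_{|u|=n}Y_u$ vanishes almost surely. The natural route uses the union bound
\begin{equation*}
\Prob\bigl(\max_{|u|=n}Y_u > \eps\, m(2)^{n/2}\bigr) \le \me\bigl[\#\{|u|=n\colon Y_u > \eps\, m(2)^{n/2}\}\bigr],
\end{equation*}
followed by the $\gamma=1$ many-to-one identity (legitimate because $m(1)=1$), which reduces the right-hand side to $\me\bigl[e^{\tilde S_n}\,\1\{\tilde S_n<-\log\eps+\tfrac{n}{2}|\log m(2)|\}\bigr]$, where $\tilde S_n$ is the random walk with increment law $e^{-x}\mu(\mathrm{d} x)$. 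A Cram\'er-type exponential estimate for $\tilde S_n$, exploiting $m(2)<1$, yields a geometrically summable bound and Borel--Cantelli gives (ii). Once (ii) is in hand, the remaining steps---characteristic-function computation and passage to f.d.d.\ convergence---are routine.
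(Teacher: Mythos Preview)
Your overall plan coincides with the paper's: reduce to finite-dimensional distributions, use the decomposition $W_\infty-W_{n+r}=\sum_{|u|=n}Y_u(W_\infty^{(u)}-W_r^{(u)})$, and apply a multivariate Lindeberg CLT conditionally on $\F_n$, with the covariance computation and the limit $\sum_{|u|=n}Y_u^2/m(2)^n\to W_\infty(2)$ exactly as you describe. (The paper in fact establishes the stronger almost-surely-weak convergence and then deduces the weak convergence, but your direct route via convergence of conditional characteristic functions is perfectly adequate for the stated theorem.)

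The genuine gap is your treatment of~(ii), the negligibility of the maximal weight. Your union bound plus many-to-one gives
\[
\Prob\Bigl(\max_{|u|=n}Y_u>\eps\,m(2)^{n/2}\Bigr)\le \me\bigl[e^{\tilde S_n}\1_{\{\tilde S_n<c_n\}}\bigr],\qquad c_n=-\log\eps+\tfrac{n}{2}\lvert\log m(2)\rvert,
\]
and the Chernoff bound yields $\me[e^{\tilde S_n}\1_{\{\tilde S_n<c_n\}}]\le \eps^{-\lambda}\bigl(m(\lambda)\,m(2)^{-\lambda/2}\bigr)^n$ for any $\lambda>0$ with $m(\lambda)<\infty$. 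Under the hypotheses of the theorem you only know $m$ is finite on $[1,2]$, and on that interval the exponent $h(\lambda):=\log m(\lambda)-\tfrac{\lambda}{2}\log m(2)$ is convex with $h(1)=-\tfrac12\log m(2)>0$ and $h(2)=0$. Whenever the left derivative satisfies $h'(2^-)\le 0$, convexity forces $h\ge 0$ on all of $[1,2]$, so the Chernoff bound never beats the trivial $O(1)$ and Borel--Cantelli cannot be invoked. But $h'(2^-)\le 0$ is exactly the condition $-m'(2)/m(2)\ge -\tfrac12\log m(2)$, which is the \emph{interesting} regime: it is precisely the standing assumption~\eqref{mean} that makes $(W_n(2))$ uniformly integrable and $W_\infty(2)>0$. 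In other words, your first-moment argument fails whenever the limit in~\eqref{clt} is nondegenerate. The paper bypasses this by invoking Theorem~3 of Biggins (1998), which shows $\max_{|u|=n}Y_u^2/m(2)^n\to 0$ a.s.\ under the sole assumption $m(2)<\infty$; that result does not follow from a first-moment bound and you should cite it (or reproduce its argument) rather than appeal to a Cram\'er estimate.
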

Note that $(U_r)_{r\in\mn_0}$ can be viewed as an $\text{AR}(1)$-process or as an Ornstein--Uhlenbeck process sampled at nonnegative integer times. In the case when the martingale $(W_n(2))_{n\in\mn_0}$ is not uniformly integrable (and hence, $W_\infty(2)=0$), Theorem~\ref{main2} is still valid, but the limiting process in~\eqref{clt} is trivial.
Specifying Theorem~\ref{main2} to $r=0$, we obtain the following central limit theorem for the tail of the Biggins martingale.

\begin{cor}\label{clt1}
Suppose that $m(1)=1$, $\Var W_1(1) < \infty$ and $m(2)<1$. Then,
$$
\frac{W_\infty(1) - W_{n}(1)}{(m(2))^{n/2}} \todistr
{\rm N} (0,v^2 W_\infty(2)),
$$
where the limiting distribution is a scale mixture of normals with randomized variance $v^2 W_\infty(2)$.
\end{cor}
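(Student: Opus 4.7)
The plan is to deduce Corollary \ref{clt1} directly from Theorem \ref{main2} by taking the $r = 0$ marginal. The coordinate projection $\pi_0 \colon (\R^\infty, \rho) \to \R$, $\pi_0(x) = x_0$, is continuous because $\rho$ metrises pointwise (hence coordinate-wise) convergence. By the continuous mapping theorem, the weak convergence \eqref{clt} on $(\R^\infty, \rho)$ entails convergence in distribution of the zeroth coordinate, giving
\[
\frac{W_\infty(1) - W_n(1)}{(m(2))^{n/2}} \todistr \sqrt{v^2 W_\infty(2)}\, U_0.
\]

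It then remains to identify the law of $\sqrt{v^2 W_\infty(2)}\, U_0$. Since $\Cov(U_0, U_0) = (m(2))^0 = 1$, the variable $U_0$ is standard normal; combined with its independence from $W_\infty(2)$, this gives, conditionally on $W_\infty(2)$, that $\sqrt{v^2 W_\infty(2)}\, U_0 \sim {\rm N}(0, v^2 W_\infty(2))$. Integrating out $W_\infty(2)$ yields exactly the claimed scale mixture of centred normals with randomised variance $v^2 W_\infty(2)$.

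If one wished to bypass Theorem \ref{main2} and prove the corollary in isolation, the natural route would be the branching decomposition
\[
W_\infty(1) - W_n(1) = \sum_{|u|=n} e^{-S(u)} \bigl( W_\infty^{(u)}(1) - 1 \bigr),
\]
where $(W_\infty^{(u)}(1))_{|u|=n}$ are i.i.d.\ copies of $W_\infty(1)$, independent of $\F_n$, each of mean $1$ (guaranteed by the $L^2$-boundedness that follows from $\sigma^2 < \infty$ and $m(2) < 1$) and variance $v^2 = \sigma^2/(1 - m(2))$. Conditionally on $\F_n$, the summands are independent and zero-mean with total conditional variance $v^2 (m(2))^n W_n(2)$; after dividing by $(m(2))^{n/2}$ this conditional variance tends a.s.\ to $v^2 W_\infty(2)$. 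A conditional Lindeberg CLT would then deliver the limit ${\rm N}(0, v^2 W_\infty(2))$. The only non-routine ingredient is the Lindeberg condition, which reduces to showing that $(m(2))^{-n/2}\max_{|u|=n} e^{-S(u)} \to 0$ in probability; this control on the maximal contribution would be the main obstacle in a stand-alone argument, but it is not needed here since we obtain the corollary at once from Theorem \ref{main2}.
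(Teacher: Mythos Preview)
Your proof is correct and follows exactly the paper's approach: the paper simply states that Corollary~\ref{clt1} is obtained by ``specifying Theorem~\ref{main2} to $r=0$,'' and you have supplied the routine details (continuity of the coordinate projection, identification of the law of $\sqrt{v^2 W_\infty(2)}\,U_0$). Your third paragraph sketching a direct conditional Lindeberg argument is extraneous but accurate, and in fact mirrors the strategy the paper uses to prove the stronger Theorem~\ref{main2_asw}.
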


In fact, we shall prove a result with a mode of convergence stronger than in Theorem~\ref{main2}. Let $\xi:\Omega\to E$ be a random variable on $(\Omega, \mathcal F, \Prob)$ with values in a Polish space $E$, and let $\mathcal G \subset \mathcal F$ be a $\sigma$-field. Denote by $\M(E)$ the space of probability measures on $E$ endowed with the topology of weak convergence. A random variable of the form $L:\Omega \to \M(E)$ is called a \emph{Markov kernel} or a \emph{probability transition kernel}.
The conditional law of $\xi$ given $\mathcal G$ is defined as a $\mathcal G$-measurable mapping $L: \Omega \to \M(E)$ such that for every random event $A\in \mathcal G$ and every bounded Borel function $f: E\to\R$, we have
$$
\me [f(\xi) \1_{A}] = \int_A \left(\int_E f(x) L (\omega; {\rm d} x) \right)\Prob({\rm d} \omega).
$$
It is known that $L$ is defined uniquely up to sets of probability $0$. A sequence of Markov kernels $L_n:\Omega \to \M(E)$ converges to a Markov  kernel $L_\infty:\Omega \to \M(E)$ in the \emph{almost surely weak} (a.s.w.)\ sense if the set of $\omega\in\Omega$ for which the probability measure $L_n(\omega)$ converges to $L_\infty(\omega)$ weakly on $E$ has probability $1$. We refer to~\cite{Gruebel+Kabluchko:2014} for the basic properties of the a.s.w.\ convergence and its relations to other modes of convergence (including the weak and the stable convergence).

\begin{thm}\label{main2_asw}
Suppose that $m(1)=1$, $\Var W_1(1) < \infty$ and $m(2)<1$.
Denote by $L_n:\Omega \to \M(\R^{\infty})$ the conditional law of
the process
$$
\left( \frac{W_\infty(1) - W_{n+r}(1)}{(m(2))^{(n+r)/2}} \right)_{r\in \mn_0}
$$
given the $\sigma$-field $\F_n$ and viewed as a random variable on $(\Omega, \F,\Prob)$ with values in $\M(\R^{\infty})$. Then, $L_n$ converges almost surely weakly to the Markov kernel
$$
L_\infty:\Omega \to \M(\R^{\infty}), \quad \omega \mapsto \mathcal L \left\{ \left(\sqrt{v^2 W_\infty(2;\omega)} \, U_r\right)_{r\in\mn_0 }\right\},
$$
where $\mathcal L\{\cdot\}$ denotes the probability law of a
process, and $(U_r)_{r\in\mn_0}$ is a discrete-time
Ornstein--Uhlenbeck process as in Theorem \ref{main2} but defined
on some probability space other than $(\Omega, \F, \Prob)$.
\end{thm}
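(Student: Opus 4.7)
My approach is to use the branching structure of the BRW to represent the tail $(W_\infty(1)-W_{n+r}(1))_{r\in\mn_0}$ as a weighted sum of conditionally i.i.d.\ processes given $\F_n$, and then invoke a conditional multivariate Lindeberg--Feller CLT. For each $u$ with $|u|=n$, let $(W_r^{(u)}(1))_{r\in\mn_0}$ be the Biggins martingale built from the subtree rooted at $u$ and $W_\infty^{(u)}(1)$ its a.s.\ limit; for $|u|=n$ these are i.i.d.\ copies of $(W_r(1))_r$ and independent of $\F_n$. The branching recursions $W_{n+r}(1)=\sum_{|u|=n}e^{-S(u)}W_r^{(u)}(1)$ and $W_\infty(1)=\sum_{|u|=n}e^{-S(u)}W_\infty^{(u)}(1)$ yield
\[
\frac{W_\infty(1)-W_{n+r}(1)}{(m(2))^{(n+r)/2}}\;=\;\sum_{|u|=n}A_u^{(n)}\,Z_r^{(u)},\quad A_u^{(n)}:=\frac{e^{-S(u)}}{(m(2))^{n/2}},\;\;Z_r^{(u)}:=\frac{W_\infty^{(u)}(1)-W_r^{(u)}(1)}{(m(2))^{r/2}}.
\]
By martingale orthogonality of increments, the centred process $Z=(Z_r)_{r\in\mn_0}$ satisfies $\Cov(Z_r,Z_s)=v^2(m(2))^{|r-s|/2}$, matching the target covariance; and $\sum_{|u|=n}(A_u^{(n)})^2=W_n(2)\toas W_\infty(2)$.

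Fix $R\in\mn_0$. Conditionally on $\F_n$, the $\R^{R+1}$-valued vectors $(Z_0^{(u)},\ldots,Z_R^{(u)})$, $|u|=n$, are i.i.d.\ and centred with covariance $C_R:=(v^2(m(2))^{|r-s|/2})_{0\le r,s\le R}$, while the weights $A_u^{(n)}$ are $\F_n$-measurable. The conditional covariance of $(V_n(0),\ldots,V_n(R))$ given $\F_n$ equals $W_n(2)\,C_R\toas W_\infty(2)\,C_R$, matching the $(R+1)$-dimensional marginal of $L_\infty(\omega)$. Granted the conditional Lindeberg condition stated below a.s., the conditional multivariate Lindeberg--Feller theorem delivers a.s.\ weak convergence of the conditional law of $(V_n(r))_{r=0}^R$ to $\mathcal N(0,W_\infty(2)\,C_R)$. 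Since $\rho$ metrizes the product topology on $\R^\infty$, a.s.w.\ convergence of $L_n$ to $L_\infty$ on $\R^\infty$ is equivalent to a.s.w.\ convergence of every finite-dimensional projection, whence the claim.

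The principal technical point is the conditional Lindeberg condition
\[
\sum_{|u|=n}(A_u^{(n)})^2\,\me\bigl[\|Z_{\le R}\|^2\1_{\{A_u^{(n)}\|Z_{\le R}\|>\eps\}}\bigr]\toas 0,
\]
where $Z_{\le R}:=(Z_0,\ldots,Z_R)$, which by dominated convergence reduces to showing $a_n:=\max_{|u|=n}A_u^{(n)}\toas 0$ on the survival event. When $(W_n(2))$ is not uniformly integrable, $W_\infty(2)=0$ a.s., so $a_n^2\le W_n(2)\toas 0$ supplies the Lindeberg condition for free; here $L_\infty=\delta_0$ is also immediate from $\Var(V_n(r)\mid\F_n)=W_n(2)v^2\toas 0$ via conditional Chebyshev. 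When $(W_n(2))$ is uniformly integrable, Biggins' theorem forbids $\theta=2$ from being the minimiser of $\theta\mapsto\log m(\theta)/\theta$ on $(0,\infty)$ (otherwise the critical-case theorem would force $W_\infty(2)=0$), so $\inf_{\theta>0}\log m(\theta)/\theta<(1/2)\log m(2)$ strictly. The classical first-moment bound $\Prob(\min_{|u|=n}S(u)\le bn)\le e^{n(\theta b+\log m(\theta))}$ for $\theta>0$, combined with Borel--Cantelli, then yields $\liminf_n\min_{|u|=n}S(u)/n>-(1/2)\log m(2)$ strictly, and hence $a_n$ decays exponentially.
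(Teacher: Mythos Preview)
Your overall strategy---the branching decomposition, the reduction to finite-dimensional projections, the pointwise (in $\omega$) application of the multivariate Lindeberg--Feller CLT, and the identification of the limiting covariance via $W_n(2)\to W_\infty(2)$---is exactly the route the paper takes. The only substantive divergence is in how you establish
\[
a_n:=\max_{|u|=n}\frac{e^{-S(u)}}{(m(2))^{n/2}}\toas 0,
\]
and here your argument has a genuine gap.

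In your Case~2 you claim that uniform integrability of $(W_n(2))_{n\ge0}$ forces $\inf_{\theta>0}\phi(\theta)<\phi(2)$ strictly, where $\phi(\theta)=\theta^{-1}\log m(\theta)$, by appealing to the ``critical-case theorem''. But the necessary condition for uniform integrability is \eqref{mean}, which is equivalent to the \emph{left} derivative satisfying $\phi'(2^-)<0$. This only tells you that $\phi(\theta)>\phi(2)$ for $\theta$ slightly \emph{below} $2$; to get $\phi(\theta)<\phi(2)$ you would need some $\theta>2$ with $m(\theta)<\infty$, and nothing in the hypotheses ($m(1)=1$, $m(2)<1$, $\Var W_1(1)<\infty$) guarantees this. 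If $2$ is the right endpoint of $\D(m)$, then $\phi$ attains its infimum on $\D(m)\cap(0,\infty)$ at the boundary point $\theta=2$ while $\phi'(2^-)<0$; this is \emph{not} the critical case (no interior stationary point), so the critical-case degeneracy $W_\infty(2)=0$ does not apply, and your first-moment/Borel--Cantelli bound yields only $\liminf_n n^{-1}\min_{|u|=n}S(u)\ge -\tfrac12\log m(2)$, not strict inequality. Hence $a_n\to0$ does not follow from your argument.

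The paper avoids this by invoking Theorem~3 of Biggins (1998), which gives $\sup_{|u|=n}Y_u^2/(m(2))^n\to0$ a.s.\ under the sole assumption $m(2)<\infty$, with no case distinction and no need for finiteness of $m$ beyond $\theta=2$. Replacing your large-deviation step by this citation (or reproving Biggins' lemma) closes the gap; the rest of your proof then goes through and matches the paper's.
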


It follows from Proposition~4.6 and Remark~4.7 of~\cite{Gruebel+Kabluchko:2014} that the weak convergence in Theorem~\ref{main2} is a consequence of the a.s.w.\ convergence in Theorem~\ref{main2_asw}. Hence, we only need to prove Theorem~\ref{main2_asw}. This will be done in Section~\ref{sec:proof_clt}.
Specifying Theorem~\ref{main2_asw} to $r=0$ we obtain the following almost surely weak version of Corollary~\ref{clt1}.
\begin{cor}
Suppose that $m(1)=1$, $\Var W_1(1) < \infty$ and $m(2)<1$. Then, we have the following almost surely weak convergence of Markov kernels from $\Omega$ to $\M(\R)$:
\begin{equation}\label{eq:cor_clt_asw}
\mathcal L \left\{\frac{W_\infty(1) - W_n(1)}{(m(2))^{n/2}}\Big |\F_n \right\} \toasw
\left\{\omega \mapsto {\rm N}\left(0, v^2 W_\infty(2;\omega)\right) \right\}.
\end{equation}
\end{cor}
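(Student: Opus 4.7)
The plan is to derive this corollary directly as the one-dimensional marginal of Theorem~\ref{main2_asw} via the continuous mapping principle for a.s.w.\ convergence of Markov kernels. First I would introduce the coordinate projection $\pi_0: \R^\infty \to \R$, $x\mapsto x_0$, which is continuous with respect to the metric $\rho$ metrizing pointwise convergence. Observe that the random variable
$$
\frac{W_\infty(1) - W_n(1)}{(m(2))^{n/2}}
$$
is precisely the image under $\pi_0$ of the $\R^\infty$-valued process appearing in Theorem~\ref{main2_asw}. Consequently, its conditional law given $\F_n$ is the pushforward $(\pi_0)_* L_n$ of the kernel $L_n$ from Theorem~\ref{main2_asw}.

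Next I would invoke the standard fact (see~\cite{Gruebel+Kabluchko:2014}) that if $L_n \toasw L_\infty$ as Markov kernels from $\Omega$ to $\M(E)$ and $f: E\to E'$ is continuous, then $f_* L_n \toasw f_* L_\infty$ as Markov kernels from $\Omega$ to $\M(E')$; this is immediate from the continuous mapping theorem applied $\omega$-wise on the full-probability set where weak convergence of the $L_n(\omega)$ to $L_\infty(\omega)$ holds. Applying this with $f=\pi_0$ yields
$$
(\pi_0)_* L_n \toasw (\pi_0)_* L_\infty.
$$

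It remains to identify the limit. Under $L_\infty(\omega)$ the coordinate $\pi_0$ equals $\sqrt{v^2 W_\infty(2;\omega)}\,U_0$, where $U_0$ is the first coordinate of the stationary Gaussian sequence $(U_r)_{r\in\mn_0}$. By the prescribed covariance $\Cov(U_r,U_s)=(m(2))^{|r-s|/2}$ we have $\Var U_0 = 1$, so that $U_0\sim {\rm N}(0,1)$. Since $U_0$ is independent of $W_\infty(2)$ and $W_\infty(2;\omega)$ is fixed under $L_\infty(\omega)$, the pushforward $(\pi_0)_* L_\infty(\omega)$ equals ${\rm N}(0, v^2 W_\infty(2;\omega))$, which is exactly the right-hand side of~\eqref{eq:cor_clt_asw}.

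No serious obstacle is expected here: the corollary is a routine specialization of Theorem~\ref{main2_asw}, and the only ingredient beyond that theorem is the elementary stability of a.s.w.\ convergence under continuous pushforwards, which is recorded in~\cite{Gruebel+Kabluchko:2014}. The only point worth being careful about is that the map $\pi_0$ be continuous in the metric $\rho$, which it plainly is, and that $\Var U_0 = 1$, ensuring the conditional variance is $v^2 W_\infty(2;\omega)$ rather than some other multiple of it.
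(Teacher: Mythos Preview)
Your proposal is correct and follows exactly the paper's approach: the paper simply states that the corollary is obtained by ``specifying Theorem~\ref{main2_asw} to $r=0$,'' and you have spelled out the routine details (continuity of the coordinate projection, stability of a.s.w.\ convergence under continuous pushforwards, and the identification $U_0\sim{\rm N}(0,1)$) that make this specialization rigorous.
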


We can also derive a central limit theorem for the ``log-partition function'' $\log W_n(1)$.
\begin{cor}
Suppose that $m(1)=1$, $\sigma^2 = \Var W_1(1) < \infty$,
$m(2)<1$, and that the survival event $\mathcal S$ has probability
$1$. Then, we have the following almost surely weak convergence of
Markov kernels from $\Omega$ to $\M(\R)$:
\begin{equation}\label{eq:cor_clt_log_asw}
\mathcal L \left\{\frac{\log W_\infty(1) - \log
W_n(1)}{(m(2))^{n/2}}\Big |\F_n
\right\}
\toasw
\left\{\omega \mapsto {\rm N}\left(0,
v^2\frac{W_\infty(2;\omega)}{W_\infty^2(1;\omega)}\right)
\right\}.
\end{equation}
\end{cor}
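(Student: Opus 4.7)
The plan is to reduce (\ref{eq:cor_clt_log_asw}) to the previously established a.s.w.\ convergence (\ref{eq:cor_clt_asw}) via a first-order Taylor expansion of the logarithm. Set $Y_n := (W_\infty(1)-W_n(1))/(m(2))^{n/2}$ and $h_n := (m(2))^{n/2}$. Since $\Prob[\mathcal S]=1$, we have $W_\infty(1) > 0$ a.s., and because $W_n(1) \to W_\infty(1)$ a.s., the sequence $W_n(1)$ is a.s.\ bounded away from zero for all sufficiently large $n$. Consequently,
$$
\frac{\log W_\infty(1) - \log W_n(1)}{h_n} = \frac{1}{h_n}\log\Bigl(1 + \frac{h_n Y_n}{W_n(1)}\Bigr)
$$
is well-defined on the event of survival for large $n$.

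Next I would apply $\log(1+x) = x + O(x^2)$ for $|x| \leq 1/2$. By (\ref{eq:cor_clt_asw}), the conditional law of $Y_n$ given $\F_n$ is tight for a.e.\ $\omega$; combined with $h_n \to 0$ and $W_n(1) \to W_\infty(1) > 0$ a.s., this gives $h_n Y_n / W_n(1) \to 0$ in conditional probability for a.e.\ $\omega$. On the event $\{h_n|Y_n|/W_n(1) \leq 1/2\}$, whose conditional probability tends to $1$ a.s., the expansion yields
$$
\frac{\log W_\infty(1) - \log W_n(1)}{h_n} = \frac{Y_n}{W_n(1)} + r_n, \qquad |r_n| \leq C\,\frac{h_n\, Y_n^2}{W_n^2(1)}.
$$
The factor $h_n / W_n^2(1)$ is $\F_n$-measurable and tends to $0$ a.s., while $Y_n^2$ is conditionally tight for a.e.\ $\omega$, so $r_n \to 0$ in conditional probability for a.e.\ $\omega$.

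Finally, $1/W_n(1)$ is $\F_n$-measurable with $1/W_n(1) \to 1/W_\infty(1)$ a.s. Multiplying the a.s.w.\ limit in (\ref{eq:cor_clt_asw}) by this convergent scalar shows that $\mathcal L\{Y_n/W_n(1)\mid\F_n\}$ converges a.s.w.\ to ${\rm N}(0,\, v^2 W_\infty(2;\omega)/W_\infty^2(1;\omega))$. A conditional Slutsky-type argument, applied pointwise in $\omega$, then absorbs the vanishing remainder $r_n$ and yields (\ref{eq:cor_clt_log_asw}). The only aspect requiring care is that vanishing of the remainder must hold in conditional probability \emph{for almost every fixed $\omega$}, not merely unconditionally; this is precisely the setting in which the a.s.w.\ refinement of Theorem~\ref{main2_asw} (as opposed to plain weak convergence) is needed, and it is handled by the tightness estimate above.
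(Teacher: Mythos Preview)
Your proposal is correct and follows essentially the same route as the paper: divide the a.s.w.\ limit in (\ref{eq:cor_clt_asw}) by the $\F_n$-measurable factor $W_n(1)$, use $W_n(1)\to W_\infty(1)>0$ a.s., and then linearize the logarithm, applying a Slutsky-type argument pointwise in $\omega$. The only difference is cosmetic: the paper packages the linearization as the general observation that if $a_n^{-1}\xi_n\todistr\xi$ with $a_n\to 0$ then $a_n^{-1}\log(1+\xi_n)\todistr\xi$, whereas you spell out the Taylor remainder explicitly.
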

\begin{proof}
Dividing the Markov kernels on both sides
of~\eqref{eq:cor_clt_asw} by $W_n(1)$ (which is $\F_n$-measurable)
and using the fact that $\lim_{n\to\infty} W_n(1) = W_\infty(1)
>0$ a.s.\ on $\mathcal{S}$ (for the positivity, see
the implication (ii) $\Rightarrow$ (i) on p.~218 in
\cite{Lyons:1997}) together with the Slutsky lemma, we obtain that
\begin{equation}\label{eq:clt_log_wspom}
\mathcal L \left\{\frac 1{(m(2))^{n/2}}
\left(\frac{W_\infty(1)}{W_n(1)} - 1\right)\Big |\F_n
\right\} \toasw
\left\{\omega \mapsto {\rm N}\left(0, v^2
\frac{W_\infty(2;\omega)}{W_\infty^2(1;\omega)}\right)
\right\}.
\end{equation}
It is easy to check that if $(\xi_n)_{n\in\mn_0}$ is a sequence of random variables such that $a_n^{-1}\xi_n$ converges in distribution to some $\xi$ as $n\to\infty$, where $a_n\to 0$ is a deterministic sequence, then $a_n^{-1} \log (1+\xi_n)$ converges in distribution to the same limit $\xi$. Applying this to~\eqref{eq:clt_log_wspom} pointwise yields~\eqref{eq:cor_clt_log_asw}.
\end{proof}

A central limit theorem for the tail martingale of a
Galton--Watson process was obtained by Athreya~\cite{Athreya:1968}
(who considered multitype branching processes) and
Heyde~\cite{Heyde:1970} (who also treated the case when the limit
is $\alpha$-stable in~\cite{Heyde:1971}). This CLT can also be
found on page 55 of the book~\cite{Athreya:2004}. In the more
general setting of multitype branching processes, related CLT's
were obtained in~\cite{Asmussen+Keiding:1978, Athreya:1968,
Kesten+Stigum:1966}. A functional CLT for the tail martingale was
obtained by Heyde and Brown~\cite{Heyde+Brown:1971}. By
considering BRW with trivial displacements (see the proof of
Corollary~\ref{cor2} for more details), the results of
Section~\ref{cltres} can be used to recover most of the results
obtained in~\cite{buhler:1969,Heyde:1970,Heyde+Brown:1971}. Linear
statistics of branching diffusion processes and superprocesses are
objects of current active studies; see, e.g.,\
\cite{Adamczak+Milos:2015} and~\cite{Ren+Song+Zhang:2015}.
Although the Biggins martingale is a special case of linear
statistics, the conditions imposed
in~\cite{Adamczak+Milos:2015,Ren+Song+Zhang:2015} exclude test
functions of the form $x\mapsto \eee^{-x}$. In the setting of
weighted branching processes (which includes BRW as a special
case), a CLT was obtained in~\cite{Roesler+Topchii+Vatutin:2002},
however the moment conditions
of~\cite{Roesler+Topchii+Vatutin:2002} are slightly more
restrictive than ours. Also, we provide a functional CLT and a
stronger (a.s.w.)\ mode of convergence. Recently, CLT's for tail
martingales associated with random trees (and related to the
derivative of the Biggins martingale at $0$) were proved
in~\cite{Gruebel+Kabluchko:2014, Neininger:2013, Sulzbach:2015}.

\subsection{Law of the iterated logarithm}\label{lilres}

The law of the iterated logarithm given next complements the central limit theorem given in Corollary \ref{clt1}.
\begin{thm}\label{main1}
Assume that $m(1)=1$, $\sigma^2 = \Var W_1(1)<\infty$, $\me W_1(2)\log^+W_1(2)<\infty$, and that the function $r\to (m(r))^{1/r}$ is finite and decreasing on $[1,2]$ with
\begin{equation}\label{mean}
{-\log m(2)\over 2}<-{m^\prime(2)\over m(2)},
\end{equation}
where $m^\prime$ denotes the left derivative. Then, $W_\infty(1)$ and $W_\infty(2)$ are positive almost surely on the survival set $\mathcal{S}$,  and
\begin{align}
&\underset{n\to\infty}{\lim\sup} \frac {W_\infty(1)-W_n(1)} {\sqrt{(m(2))^n\log n}} =\sqrt{2v^2W_\infty(2)},\label{lil} \\ 
&\underset{n\to\infty}{\lim\inf} \frac {W_\infty(1)-W_n(1)}{\sqrt{(m(2))^n\log n}}=-\sqrt{2v^2W_\infty(2)}\label{lil1}
\end{align}
almost surely, where $v^2 = \Var W_\infty(1) =
\sigma^2(1-m(2))^{-1}<\infty$.
\end{thm}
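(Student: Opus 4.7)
The plan is to exploit the fundamental decomposition
$$
D_n := W_\infty(1) - W_n(1) = \sum_{|u|=n} Y_u \bigl(W_\infty^{(u)}(1) - 1\bigr),
$$
in which, for each fixed $n$, the summands $\xi_u := W_\infty^{(u)}(1) - 1$ with $|u|=n$ are i.i.d.\ copies of $W_\infty(1)-1$ independent of $\F_n$, with mean $0$ and variance $v^2$. Thus, conditionally on $\F_n$, $D_n$ is a sum of independent centered random variables with total conditional variance $v^2 (m(2))^n W_n(2)$, asymptotic to $v^2 (m(2))^n W_\infty(2)$ almost surely. The positivity assertions are settled first: the assumption $\me W_1(2) \log^+ W_1(2) < \infty$ together with inequality \eqref{mean} is exactly Biggins' uniform integrability criterion for $(W_n(2))$, giving $\me W_\infty(2)=1$ and hence $W_\infty(2) > 0$ a.s.\ on $\mathcal{S}$; for $W_\infty(1)$, iterating the conditional variance identity yields $\sup_n \me W_n(1)^2 = 1 + \sigma^2\sum_{k=0}^{n-1}(m(2))^k \le 1 + v^2 < \infty$, so $(W_n(1))$ is $L^2$-bounded, hence uniformly integrable, and $W_\infty(1) > 0$ a.s.\ on $\mathcal{S}$.

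For the upper bound in \eqref{lil}, set $c_n := \sqrt{2 v^2 W_\infty(2)(m(2))^n \log n}$ and fix $\eps > 0$. The goal is to apply Borel--Cantelli I to $\{D_n > (1+\eps) c_n\}$. After localising on $\{W_\infty(2) \le K\}$ (then letting $K \to \infty$) and using $W_n(2) \to W_\infty(2)$ a.s., the task reduces to bounding $\Prob(D_n > (1+\eps) \sqrt{2 v^2 W_n(2) (m(2))^n \log n} \mid \F_n)$. Since $W_\infty(1)$ is not assumed to have exponential moments, the natural route is a truncation argument: truncate each $\xi_u$ at a level $t_n$ growing polynomially in $n$, apply Bernstein's inequality to the truncated sum (whose variance is still essentially $v^2 (m(2))^n W_n(2)$), and dispose of the untruncated tail via Markov's inequality using only $\me \xi_u^2 = v^2$. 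A careful tuning of $t_n$ yields a conditional bound of order $n^{-(1+\eps)^2}(1+o(1))$ plus a summable error, and Borel--Cantelli~I closes the argument. The companion bound $\liminf D_n / \sqrt{(m(2))^n \log n} \ge -\sqrt{2 v^2 W_\infty(2)}$ then follows by applying the identical argument to $-D_n = \sum_{|u|=n} Y_u (-\xi_u)$, whose conditional variance is unchanged.

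For the matching lower bound $\limsup D_n / \sqrt{(m(2))^n \log n} \ge \sqrt{2 v^2 W_\infty(2)}$, I will use a Borel--Cantelli~II argument along a sparse subsequence $n_k$ with $n_{k+1} - n_k \to \infty$ chosen so that $c_{n_{k+1}} = o(c_{n_k})$. Writing $D_{n_k} = B_k + D_{n_{k+1}}$, where $B_k := W_{n_{k+1}}(1) - W_{n_k}(1)$ is $\F_{n_{k+1}}$-measurable, the upper bound just established gives $|D_{n_{k+1}}| = o(c_{n_k})$ almost surely, so it is enough to show $B_k > (1-\eps) c_{n_k}$ infinitely often. Conditionally on $\F_{n_k}$, $B_k/\sqrt{(m(2))^{n_k}}$ is again a sum of many independent centered random variables, and a quantitative conditional central limit theorem (in the spirit of Theorem \ref{main2_asw}, complemented by a conditional Berry--Esseen-type lower-tail estimate) gives $\Prob(B_k > (1-\eps) c_{n_k} \mid \F_{n_k}) \ge c\, n_k^{-(1-\eps)^2 + o(1)}$, which is non-summable. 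Since these events are $\F_{n_{k+1}}$-measurable, L\'evy's conditional Borel--Cantelli lemma yields the ``i.o.''\ conclusion, and the companion $\liminf$ statement follows by symmetry. The principal technical obstacle is pinning down the conditional tails of $D_n$ sharply enough, both from above and from below, so that the exponents $(1\pm\eps)^2$ in the Borel--Cantelli estimates come out \emph{exactly} without the benefit of exponential moments of $W_\infty(1)$; obtaining a quantitative anti-concentration estimate uniformly on a full-probability subset in $\F_{n_k}$ is the delicate input required for the lower bound.
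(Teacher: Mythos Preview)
Your overall architecture --- the decomposition $W_\infty-W_n=\sum_{|u|=n}Y_u\xi_u$, a conditional normal approximation, and Borel--Cantelli in both directions --- matches the paper's. But the upper-bound step as you describe it has a genuine gap, and it is precisely the place where the extra hypotheses on $m$ (beyond uniform integrability of $(W_n(2))$) must be used.

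With a \emph{common} truncation level $t_n$ you cannot balance Bernstein against the remainder using only $\me\xi^2<\infty$. For Bernstein to produce the exponent $(1+\eps)^2\log n$ you need the bound $M_n:=t_n\max_{|u|=n}Y_u$ to satisfy $M_n c_n=o\bigl((m(2))^n\bigr)$, i.e.\ $t_n\max_{|u|=n}Y_u=o\bigl((m(2))^{n/2}/\sqrt{\log n}\bigr)$; since the only available information is $\max_{|u|=n}Y_u=o((m(2))^{n/2})$ with no rate, this forces $t_n$ to stay bounded (or even shrink). On the other side, controlling the untruncated tail by Markov/Chebyshev using only $\me\xi^2=v^2$ gives at best a conditional bound of order $\me[\xi^2\1_{\{|\xi|>t_n\}}]/\log n$ or $1/(t_n c_n)$, neither of which is summable in $n$ unless $t_n$ grows at least like $(m(2))^{-n/2}$. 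These two requirements are incompatible, so ``a careful tuning of $t_n$'' does not exist here.

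The paper resolves this by truncating each summand at the \emph{individual} level $|Y_u\xi_u|\le e^{-an}$ (with $e^a=m(2)^{-1/2}$), applying the Berry--Esseen inequality rather than Bernstein, and then showing directly that
\[
\sum_{n\ge0}e^{3an}\sum_{|u|=n}\me\bigl[|Z^{(u)}_{n,r}|^3\big|\F_n\bigr]<\infty
\quad\text{and}\quad
\sum_{n\ge0}e^{an}\sum_{|u|=n}Y_u|\xi_u|\1_{\{e^{an}Y_u|\xi_u|>1\}}<\infty
\]
almost surely. Taking expectations, both reduce to growth estimates on the renewal-type function
\[
V(x)=\sum_{n\ge0}e^{an}\Prob\{S_n-an\le\log x\},
\]
where $(S_n)$ is the associated random walk. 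The linear bound $V(x)\le cx$ (and $\int_x^\infty y^{-2}\,{\rm d}V(y)\le c/x$) is exactly where the assumptions that $r\mapsto m(r)^{1/r}$ is decreasing on $[1,2]$ and inequality \eqref{mean} are needed; without this renewal input the summability of the Berry--Esseen bounds, and hence the applicability of the Asmussen--Hering lemma underlying both Borel--Cantelli steps, fails. Your proposal uses \eqref{mean} only to ensure $W_\infty(2)>0$, which is not enough: these hypotheses are doing real quantitative work in the proof, and you should identify where.

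For the lower bound, your subsequence idea is viable, but the paper's route is tighter: it proves the \emph{full} LIL for $W_{n+r}-W_n$ with $r$ \emph{fixed} (this quantity is $\F_{n+r}$-measurable, so the second half of the Asmussen--Hering lemma gives equality directly), and then combines it with the already-established upper bound for $W_\infty-W_n$ and lets $r\to\infty$ at the very end. This avoids the quantitative anti-concentration estimate you flag as the ``principal technical obstacle''.
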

\begin{rem}\label{imp}
It is well-known (see Theorem A in \cite{Biggins:1977}, p.~218 in
\cite{Lyons:1997} or Theorem 1.3 in \cite{Alsmeyer+Iksanov:2009})
that conditions $\me W_1(2)\log^+ W_1(2)<\infty$ and \eqref{mean}
ensure the uniform integrability of $(W_n(2))_{n\in\mn_0}$ which
particularly implies that $W_\infty(2)$ is a.s.\ positive on
$\mathcal{S}$.
\end{rem}
\begin{rem}
Actually, under the assumptions $m(1)=1$ and $m(2)<+\infty$, the
conditions $\me W_1(2)\log^+W_1(2)<\infty$ and~\eqref{mean} are
also necessary for the uniform integrability of
$(W_n(2))_{n\in\mn_0}$. Indeed, the function $m(\theta)$ is convex
on the interval $[1,2]$, hence it has left derivative $m'(2)\in
(-\infty, +\infty]$. With this at hand the uniform integrability
implies~\eqref{mean} by Theorem~1.3
in~\cite{Alsmeyer+Iksanov:2009}. It is not possible that
$m'(2)=+\infty$ because, together with $m(2)<\infty$, this would
contradict~\eqref{mean}. Hence, $m'(2)$ is finite. Under this
condition,  the uniform integrability of $(W_n(2))_{n\in\mn_0}$
implies that $\me W_1(2)\log^+W_1(2)<\infty$ by Theorem~1.3
in~\cite{Alsmeyer+Iksanov:2009}.
\end{rem}
\begin{rem}
It will be shown in~\eqref{eq:cov_tail} that $$\Var [W_\infty(1) - W_n(1)] = v^2 (m(2))^n.$$
In~\eqref{lil} and~\eqref{lil1} it is possible to replace $\log n$ by the asymptotically equivalent expression $\log \log (v^2(m(2))^n)$, thereby justifying the use of the term ``law of the iterated logarithm''. Therefore, the normalization in~\eqref{lil} and~\eqref{lil1} is very similar to that in the classical law of the iterated logarithm, but it should be stressed that unlike in the classical case, the limits in~\eqref{lil} and~\eqref{lil1} are random.
\end{rem}

As an immediate consequence of Theorem \ref{main1} we derive a previously known result (see \cite{Heyde+Leslie:1971} and Theorem 3.1 (ii) on p.~28 in \cite{Asmussen+Hering:1983}) concerning the Galton--Watson process.
\begin{cor}\label{cor2}
Consider a Galton--Watson process $(Y_n)_{n\in\mn_0}$ with $m := \me
Y_1 \in (1,\infty)$ and $s^2:=\Var Y_1 <\infty$. Then, for the
martingale $W_n := Y_n/m^n$ and its almost sure limit $W_\infty$
we have
\begin{align*}
\underset{n\to\infty}{\lim\sup}{m^{n/2}(W_\infty - W_n) \over \sqrt{\log n}} = \sqrt{2v^2W_\infty},\\
\underset{n\to\infty}{\lim\inf}{m^{n/2}(W_\infty-W_n)\over
\sqrt{\log n}}=-\sqrt{2v^2W_\infty}
\end{align*}
almost surely, where $v^2:=\Var W_\infty = s^2(m(m-1))^{-1}$.
\end{cor}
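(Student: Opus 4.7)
The plan is to realize the Galton--Watson martingale as the Biggins martingale of a degenerate branching random walk and then apply Theorem~\ref{main1} directly. Concretely, I would take the BRW in which, conditionally on the genealogy $(Y_n)_{n\in\mn_0}$, every offspring is placed at a deterministic displacement $X_i = \log m$ from its mother, so that the first generation point process is $\mm = Y_1\,\delta_{\log m}$ and $S(u) = n\log m$ for all individuals $u$ with $|u|=n$. The genealogical tree is exactly the Galton--Watson tree with offspring mean $m$ and variance $s^2$.

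With this choice a direct calculation gives
\[
m(\theta) = \me\Bigl[\sum_{i=1}^{J} \eee^{-\theta X_i}\Bigr] = m\cdot m^{-\theta} = m^{1-\theta},
\]
so $m(1)=1$, $m(2) = m^{-1}$, and for every $\theta$ one has
\[
W_n(\theta) = \frac{1}{(m(\theta))^n}\sum_{|u|=n}(\eee^{-S(u)})^{\theta} = \frac{Y_n\,m^{-n\theta}}{m^{n(1-\theta)}} = \frac{Y_n}{m^n} = W_n.
\]
In particular $W_n(1) = W_n(2) = W_n$ and $W_\infty(1)=W_\infty(2)=W_\infty$. Next I would check the hypotheses of Theorem~\ref{main1}: $\Var W_1(1) = s^2/m^2 < \infty$; $\me W_1(2)\log^{+}W_1(2)<\infty$ follows from $\me Y_1^{2}<\infty$; the function $r\mapsto (m(r))^{1/r} = m^{(1-r)/r}$ is decreasing on $[1,2]$ because $(1-r)/r$ is; and condition~\eqref{mean} reduces, via $m'(\theta) = -m^{1-\theta}\log m$, to the trivial inequality $(\log m)/2 < \log m$. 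The quantity $v^2 = \sigma^2(1-m(2))^{-1}$ becomes $(s^2/m^2)\cdot (1-1/m)^{-1} = s^2/(m(m-1))$, matching the statement.

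Applying Theorem~\ref{main1} with these identifications, the normalising sequence $\sqrt{(m(2))^n\log n} = \sqrt{m^{-n}\log n}$ and the identity $W_\infty(2)=W_\infty$ immediately yield
\[
\limsup_{n\to\infty}\frac{m^{n/2}(W_\infty - W_n)}{\sqrt{\log n}} = \sqrt{2v^2 W_\infty},\qquad \liminf_{n\to\infty}\frac{m^{n/2}(W_\infty - W_n)}{\sqrt{\log n}} = -\sqrt{2v^2 W_\infty}
\]
almost surely, which is the claim. There is no real obstacle here beyond the bookkeeping check of~\eqref{mean} for the degenerate displacement, and in particular no moment condition beyond $\Var Y_1<\infty$ need be imposed, since the distribution of $W_1(2)=Y_1/m$ is supported in a set with finite second moment.
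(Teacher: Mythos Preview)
Your proposal is correct and follows essentially the same route as the paper's own proof: both embed the Galton--Watson process as the degenerate BRW with deterministic displacement $\log m$, compute $m(\gamma)=m^{1-\gamma}$ and $W_n(\gamma)=Y_n/m^n$, and then invoke Theorem~\ref{main1}. Your write-up in fact supplies more detail than the paper on checking the hypotheses (monotonicity of $r\mapsto (m(r))^{1/r}$, the reduction of~\eqref{mean} to $(\log m)/2<\log m$, and the $x\log x$ moment), which the paper simply declares ``easy to verify''.
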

\begin{proof}
Consider a BRW in which the genealogical structure is the same as in $(Y_n)_{n\in\mn_0}$, and the displacements of all individuals are deterministic and equal to $\log m$. That is, $\eee^{-X_i}=m^{-1}$ for $i=1,\ldots, Y_1$ and we have, for
$\gamma>0$,
$$
m(\gamma) = m^{1-\gamma}
\quad\text{and}\quad
W_n(\gamma) = Y_n/m^n = W_n,
\quad   n\in\mn_0.
$$
Hence $m(1)=1$, $W_\infty=W_\infty(2)$, $\Var W_1=m^{-2}s^2$ and
$\Var W_\infty = (m(m-1))^{-1}s^2$. The assumptions of
Theorem~\ref{main1} are easy to verify, whence the result.
\end{proof}

Plainly, Theorem \ref{main1} is a result on the rate of the a.s.\ convergence of $W_n(1)$ to its limit. There have already been several works that investigated how fast $W_n(1)$ approaches $W_\infty(1)$ in various senses, see \cite{Iksanov:2006, Iksanov+Meiners:2010} for the rate of a.s.\ convergence, \cite{Alsmeyer+Iksanov+Polotskiy+Roesler:2009} for the rate of $L_p$-convergence. Laws of the iterated logarithm for martingales related to path length of random trees were obtained in~\cite{Sulzbach:2015}. We also refer to~\cite{Hall+Heyde:1980} for general central limit theorems and laws of the iterated logarithm for martingales not necessarily related to branching processes.

\section{Proof of Theorem~\ref{main2_asw}}\label{sec:proof_clt}

Throughout the rest of the paper, we shall use $W_n$ and
$W_\infty$ as shorthands for $W_n(1)$ and $W_\infty(1)$. Note that
$W_n(2)$ and $W_\infty(2)$ retain their meaning.

For any $u \in \V$ let $W_r^{(u)}$ and $W_\infty^{(u)}$,  $r\in\mn_{0}$, be the analogues of $W_r$ and $W_\infty$, $r\in\mn_{0}$, but based on the progeny of the individual $u$ rather than the progeny of the initial ancestor $\varnothing$. That is,
$$
W_{r}^{(u)} = \sum_{|v|=r} \eee^{-(S(uv) - S(u))}, \quad r\in \mn_0,
\quad
\text{ and }
\quad
W_{\infty}^{(u)} = \lim_{r\to\infty} W_r^{(u)} \quad \text{a.s.}
$$
Recall the notation $Y_u= \eee^{-S(u)}$. We shall frequently use the decomposition
$$
W_{n+r} = \sum_{|u|=n} Y_u W_r^{(u)}, \quad r\in \mn_0\cup\{\infty\}.
$$
Observe that for $|u|=n$,  the $Y_u$'s are $\mathcal{F}_n$-measurable, whereas the $W_r^{(u)}$'s are independent of $\mathcal{F}_n$.
We need two results on the covariance structure of the martingale $(W_n)_{n\in\mn_0}$.
\begin{prop}\label{prop:var_cov}
Under the assumptions  $m(2)<1$ and $\sigma^2 = \Var W_1 <\infty$ we have
\begin{equation}\label{variance}
\Var  W_r=\sigma^2(1+m(2)+\ldots+(m(2))^{r-1}),\quad r\in\mn.
\end{equation}
Furthermore, the martingale $(W_n)_{n\in\mn_0}$ converges in $L_2$ (and a.s.)\ to $W_\infty$ which satisfies
$$
\Var  W_\infty = \frac{\sigma^2}{1-m(2)}.
$$
In particular, $(W_n)_{n\in\mn_0}$ is uniformly integrable and $W_\infty>0$ a.s.\ on $\mathcal{S}$.
\end{prop}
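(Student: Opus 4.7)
The plan is to reduce the variance computation to a telescoping sum of squared martingale increments, each of which can be evaluated explicitly using the one-generation branching decomposition.

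First, I would use the branching decomposition to write the martingale increment as
$$
W_{n+1} - W_n = \sum_{|u|=n} Y_u \bigl(W_1^{(u)} - 1\bigr),
$$
where $(W_1^{(u)})_{|u|=n}$ are i.i.d.\ copies of $W_1$ with mean $1$ and variance $\sigma^2$, independent of $\F_n$, while the $Y_u$'s are $\F_n$-measurable. Conditioning on $\F_n$ and exploiting the conditional independence yields
$$
\me\bigl[(W_{n+1}-W_n)^2 \mid \F_n\bigr] = \sigma^2 \sum_{|u|=n} Y_u^2 = \sigma^2 (m(2))^n W_n(2),
$$
where the last equality uses the definition of $W_n(2)$. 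Taking expectations and using $\me W_n(2) = 1$ gives $\me[(W_{n+1}-W_n)^2] = \sigma^2 (m(2))^n$.

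Next I would invoke orthogonality of martingale increments: since $\me[(W_{k+1}-W_k)(W_{j+1}-W_j)]=0$ for $k\neq j$, the variance telescopes as
$$
\Var W_r = \sum_{k=0}^{r-1} \me\bigl[(W_{k+1}-W_k)^2\bigr] = \sigma^2 \sum_{k=0}^{r-1} (m(2))^k,
$$
which is precisely \eqref{variance}. Because $m(2)<1$, the partial sums are uniformly bounded, so $(W_n)_{n\in\mn_0}$ is bounded in $L_2$ and therefore converges in $L_2$ (hence also a.s., by the martingale convergence theorem) to $W_\infty$. Passing to the limit and summing the geometric series gives $\Var W_\infty = \sigma^2/(1-m(2))$. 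In particular, $L_2$-convergence yields $\me W_\infty = \me W_0 = 1$, so $\Prob(W_\infty>0)>0$ and $(W_n)_{n\in\mn_0}$ is uniformly integrable.

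The only point that is not purely computational is the a.s.\ positivity of $W_\infty$ on the survival set $\mathcal{S}$. This will follow from a standard zero–one type argument: the branching decomposition $W_\infty = \sum_{|u|=1} Y_u W_\infty^{(u)}$ with $Y_u>0$ and i.i.d.\ copies $W_\infty^{(u)}$ independent of the first generation shows that $q:=\Prob(W_\infty=0)$ coincides with the extinction probability (here we also treat the atom at $J=\infty$, where Borel–Cantelli forces some $W_\infty^{(u)}>0$ a.s.); since $\me W_\infty=1$ rules out $q=1$, we conclude $W_\infty>0$ a.s.\ on $\mathcal{S}$. Alternatively, one may simply cite the implication (ii)$\Rightarrow$(i) on p.~218 of~\cite{Lyons:1997}, whose hypotheses are subsumed by our $L_2$ assumption. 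I do not expect a genuine obstacle in the proof; the main care is in setting up the decomposition cleanly so that conditional independence of $(W_1^{(u)})_{|u|=n}$ from $\F_n$ is transparent.
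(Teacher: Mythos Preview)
Your proof is correct and rests on the same one--generation branching decomposition as the paper, but you organize the computation a bit differently: the paper proves \eqref{variance} by induction on $r$, expanding $\me[W_{r+1}^2]=\me[(\sum_{|u|=r}Y_uW_1^{(u)})^2]$ into diagonal and off--diagonal terms to show $\Var W_{r+1}=\sigma^2(m(2))^r+\Var W_r$, whereas you compute the increment variances $\me[(W_{k+1}-W_k)^2]=\sigma^2(m(2))^k$ directly via conditioning and then sum using orthogonality of martingale differences. The two routes are algebraically equivalent (indeed, the paper extracts your increment formula separately as a corollary right after the proposition), but your version is slightly more streamlined since it avoids handling the off--diagonal cross terms explicitly. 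For the a.s.\ positivity of $W_\infty$ on $\mathcal S$, the paper simply asserts it as a consequence of uniform integrability (citing \cite{Lyons:1997} elsewhere), so your zero--one sketch actually supplies more detail than the original.
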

\begin{proof}
We shall check \eqref{variance} by using mathematical induction.
The formula holds for $r=1$ because $\Var W_1 = \sigma^2$. Suppose \eqref{variance} holds for some $r\in\mn$. Then
\begin{eqnarray*}
\Var W_{r+1}
&=&
\me\Bigg[\bigg(\sum_{|u|=r}Y_u W_1^{(u)}\bigg)^2\Bigg]-1\\
&=&
\me\Bigg[\sum_{|u|=r}Y_u^2 (W_1^{(u)})^2\Bigg] + \me \Bigg[\me \Bigg[\sum_{\substack{|u|=|v|=r\\u\neq v}}Y_uY_vW_1^{(u)}W_1^{(v)}\bigg|\mathcal{F}_r\Bigg]\Bigg]-1\\
&=&
(m(2))^r (\sigma^2+1) + \me \Bigg[\sum_{\substack{|u|=|v|=r\\u\neq v}}Y_uY_v\Bigg]-1\\
&=&
\sigma^2 (m(2))^r + \Var W_r
\end{eqnarray*}
because
$$
\me \Bigg[\sum_{\substack{|u|=|v|=r\\ u\neq v}} Y_uY_v \Bigg] - 1
=
\me\Bigg[\bigg(\sum_{|u|=r}Y_u\bigg)^2\Bigg]-1-
\me\Bigg[\sum_{|u|=r}Y_u^2\Bigg]
=\Var  W_r - (m(2))^r.
$$
This completes the induction and proves \eqref{variance}. Since $m(2)<1$, the martingale $(W_n)_{n\in\mn_0}$ is bounded in $L^2$ and hence, converges in $L^2$ to $W_\infty$. In particular,  $(W_n)_{n\in\mn_0}$ is uniformly integrable and $W_\infty>0$ a.s.\ on $\mathcal{S}$.   Letting $r$ in \eqref{variance} tend to $\infty$ we infer $\Var W_\infty = \sigma^2(1-m(2))^{-1}$.
\end{proof}

\begin{cor}
The random variables $W_{r+1}-W_r$, $r\in\mn_0$, are uncorrelated and
\begin{equation}\label{var_increments}
\Var[W_{r+1}-W_r] = \sigma^2 (m(2))^r.
\end{equation}
\end{cor}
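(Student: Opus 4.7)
The plan is to deduce both claims as direct consequences of Proposition~\ref{prop:var_cov}, exploiting the martingale structure of $(W_n)_{n\in\mn_0}$.

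First I would establish that the increments $W_{r+1}-W_r$ are centered and pairwise uncorrelated. Since $\me W_n = 1$ for every $n\in\mn_0$, each increment has mean zero. For $0 \le r < s$, the martingale property gives $\me[W_{s+1}-W_s \mid \F_s] = 0$, and since $W_{r+1} - W_r$ is $\F_s$-measurable and (by Proposition~\ref{prop:var_cov}) lies in $L^2$, conditioning on $\F_s$ yields
$$
\Cov(W_{r+1}-W_r, W_{s+1}-W_s) = \me\bigl[(W_{r+1}-W_r)\,\me[W_{s+1}-W_s \mid \F_s]\bigr] = 0.
$$

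Next I would compute the variance of each increment. Writing $W_r = \sum_{k=0}^{r-1}(W_{k+1}-W_k)$ and applying the orthogonality just established gives
$$
\Var W_r = \sum_{k=0}^{r-1} \Var[W_{k+1}-W_k].
$$
Comparing this with the identity $\Var W_r = \sigma^2(1 + m(2) + \ldots + (m(2))^{r-1})$ from Proposition~\ref{prop:var_cov} and subtracting the formula for $\Var W_r$ from that for $\Var W_{r+1}$ isolates $\Var[W_{r+1}-W_r] = \sigma^2 (m(2))^r$, which is~\eqref{var_increments}.

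There is essentially no obstacle here; the only point that requires care is making sure $W_r, W_{r+1} \in L^2$ so that covariances are well defined, which is precisely what Proposition~\ref{prop:var_cov} supplies through the bound $\Var W_r \le \sigma^2(1-m(2))^{-1}$.
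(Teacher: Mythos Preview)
Your proof is correct. The uncorrelatedness argument is the same as the paper's (martingale differences in $L^2$ are orthogonal), but your variance computation takes a different route: you read off $\Var[W_{r+1}-W_r]=\Var W_{r+1}-\Var W_r$ directly from the formula for $\Var W_r$ in Proposition~\ref{prop:var_cov}, whereas the paper instead uses the branching decomposition $W_s-W_r=\sum_{|u|=r}Y_u(W_{s-r}^{(u)}-1)$ to compute $\me[(W_s-W_r)^2]=(m(2))^r\Var W_{s-r}$ for all $r<s$ and then specializes to $s=r+1$. Your argument is slightly more economical since it avoids re-entering the branching structure; the paper's computation has the small advantage of yielding the general second moment $\me[(W_s-W_r)^2]$ in one stroke, though that formula is in any case recoverable from your orthogonality plus~\eqref{var_increments}.
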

\begin{proof}
The increments $W_{r+1}-W_r$, $r\in\mn_0$, are uncorrelated just
because $(W_n)_{n\in\mn_0}$ is a martingale. We thank the referee
for this observation that enabled us to simplify our original
argument. Further, we have, for $r<s$,
$$
\me[(W_s-W_r)^2] = \me\Bigg[\bigg(\sum_{|u|=r} Y_u (W_{s-r}^{(u)}-1)\bigg)^2\Bigg] = (m(2))^r \Var W_{s-r}
=
\sigma^2 \sum_{k=r}^{s-1}(m(2))^k.
$$
This proves~\eqref{var_increments} by taking $s=r+1$.
\end{proof}

\begin{proof}[Proof of Theorem~\ref{main2_asw}]
The conditional law $L_n$ can be explicitly described as follows. On some probability space $(\tilde \Omega,  \tilde{\mathcal  F}, \tilde \Prob)$ (which is different from the probability space $(\Omega, \F, \Prob)$ on which the BRW is defined) we construct a family $(\tilde W_n^{(u)})_{n\in \mn_0\cup\{\infty\}}$, $u\in \V$, of independent (for different $u$'s) distributional copies of the stochastic process $(W_n(1))_{n\in\mn_0\cup\{\infty\}}$.
For every $\omega\in\Omega$ let $U_{n,r}(\omega)$ be  random variables on the space $(\tilde \Omega,  \tilde{\mathcal  F}, \tilde \Prob)$ defined by
\begin{equation}\label{u_n_r}
U_{n,r}(\omega) := \frac{\sum_{|u|=n} Y_u(\omega)(\tilde W_\infty^{(u)} - \tilde W_r^{(u)})}{(m(2))^{(n+r)/2}},
\quad n,r\in\mn_0.
\end{equation}
With this notation, the conditional law $L_n: \Omega \to \M(\R^{\infty})$ is the Markov kernel
$$
L_n(\omega) = \mathcal L \left\{ (U_{n,r}(\omega))_{r\in\mn_0}\right\},
$$
where $\mathcal L$ is the law taken with respect to the probability distribution $\tilde \Prob$. Recall also that the Markov kernel $L_\infty: \Omega \to \M(\R^{\infty})$ is defined by
$$
L_\infty(\omega) = \mathcal L \left\{ \left(\sqrt{v^2 W_\infty(2;\omega)} \, U_r\right)_{r\in\mn_0}\right\}.
$$

Weak convergence of probability measures on $\R^{\infty}$ is equivalent to the weak convergence of their finite-dimensional distributions. So, we need to prove that for $\Prob$--a.e.\ $\omega\in \Omega$ we have $L_n(\omega) \to L_\infty(\omega)$ in the sense of finite-dimensional distributions. We take any $r_1,\ldots,r_d\in \mn_0$ and show that for $\Prob$--a.e.\ $\omega\in \Omega$,
\begin{equation}\label{conv_fidi}
(U_{n,r_1}(\omega), \ldots, U_{n, r_d}(\omega)) \todistr \sqrt{v^2 W_\infty(2;\omega)}  (U_{r_1},\ldots, U_{r_d}).
\end{equation}
This is done by verifying the conditions of the $d$-dimensional Lindeberg central limit theorem. Clearly, \eqref{u_n_r} provides a representation of the vector $(U_{n,r_1}(\omega), \ldots, U_{n, r_d}(\omega))$ as a sum of independent but not identically distributed random vectors. (Note that $Y_u(\omega)$ are treated as constants).
For every $r,n\in\mn_0$ and $\omega\in \Omega$ we have  $\me[U_{n,r}(\omega)] = 0$ and
\begin{align*}
\lefteqn{\me [U_{n,r}(\omega)U_{n,s}(\omega)]}\\
&=
\frac 1 {(m(2))^{n + (r+s)/2}} \sum_{|u|=|v|=n} Y_u(\omega) Y_v(\omega) {\rm Cov} (\tilde W_\infty^{(u)}-\tilde W_r^{(u)}, \tilde W_\infty^{(v)} - \tilde W_s^{(v)})  \\
&=
\frac 1 {(m(2))^{n + (r+s)/2}}   \sum_{|u|=n} Y_u^2(\omega) {\rm Cov} (W_\infty - W_r, W_\infty - W_s)  \\
&= \frac {v^2} {(m(2))^{n}} \left(\sum_{|u|=n}
Y_u^2(\omega)\right)(m(2))^{|r-s|/2},
\end{align*}
where we used that $\tilde W_r^{(u)}$ and $\tilde W_s^{(v)}$ are
independent for $u\neq v$ and the formula
\begin{align}\label{eq:cov_tail}
\Cov (W_\infty - W_r, W_\infty - W_s)
=
\frac{\sigma^2}{1-m(2)} (m(2))^{\max\{r,s\}}
\end{align}
which follows from Corollary~\ref{var_increments}. By letting $n\to\infty$ it follows that for $\Prob$-a.e.\ $\omega\in\Omega$,
\begin{align*}
\lim_{n\to\infty} \me \left[U_{n,r}(\omega)U_{n,s}(\omega)\right] =
v^2 W_\infty(2;\omega)(m(2))^{|r-s|/2} = v^2 W_\infty(2;\omega)
\Cov(U_r, U_s).
\end{align*}
This verifies the convergence of covariances in~\eqref{conv_fidi}.
It remains to check the Lindeberg condition for $\Prob$-a.e.\ $\omega\in\Omega$. This can be done individually for each component of the vectors in~\eqref{conv_fidi}. For every $\eps>0$, we have
\begin{align*}
L_n(\eps)
&:=
\sum_{|u|=n} \me \left[\frac{Y_u^2(\omega) (\tilde W_\infty^{(u)} - \tilde W_\infty^{(u)})^2}{(m(2))^{n+r}}
\1_{\left\{\frac{Y_u^2(\omega) (\tilde W_\infty^{(u)} - \tilde W_\infty^{(u)})^2}{(m(2))^{n+r}}>\eps^2\right\}}\right]\\
&=
\frac 1 {(m(2))^{n+r}} \sum_{|u|=n} Y_u^2(\omega) \me \left[(W_\infty-W_r)^2 \1_{\left\{\frac{(W_\infty-W_r)^2}{(m(2))^r} > \frac{\eps^2}{Y_u^2(\omega)/(m(2))^n} \right\}} \right]\\
&\leq
\frac 1 {(m(2))^{n+r}} \left(\sum_{|u|=n} Y_u^2(\omega)\right) G_r\left(\frac{\eps^2}{\sup_{|u|=n}Y_u^2(\omega)/(m(2))^n}\right),
\end{align*}
where
$$
G_r(A) = \me \left[(W_\infty-W_r)^2 \1_{\left\{\frac{(W_\infty-W_r)^2}{(m(2))^r} > A\right\}}\right], \quad A>0.
$$
Since the second moment of $W_\infty-W_r$ is finite, we have $\lim_{A\to +\infty} G_r(A) = 0$. By Theorem 3 in \cite{Biggins:1998}, the assumption $m(2)<\infty$  ensures that
$$
\lin  \frac 1 {(m(2))^n}\underset{|u|=n}{\sup} Y_u^2(\omega) = 0 \quad \text{for $\Prob$-a.e. } \omega\in\Omega.
$$
Also, for $\Prob$-a.e.\ $\omega\in\Omega$,
$$
\lim_{n\to\infty} \frac 1 {(m(2))^{n+r}} \sum_{|u|=n} Y_u^2(\omega) = \frac 1 {(m(2))^{r}} W_\infty(2;\omega).
$$
It follows that
$$
\lin L_n(\eps) =  0 \quad \text{for $\Prob$-a.e. } \omega\in\Omega.
$$
An application of the multidimensional Lindeberg CLT completes the proof of~\eqref{conv_fidi}.
\end{proof}

\section{Proof of Theorem \ref{main1}}

Since relations \eqref{lil} and \eqref{lil1} trivially hold on
$\mathcal{S}^c$, we have to prove that these hold a.s.\ on
$\mathcal{S}$.

We start by recalling that, according to Remark \ref{imp},
$W_\infty(2)>0$ a.s.\ on $\mathcal{S}$. The proof follows the
pattern of the proof of Theorem 3.1 on p.~28 in
\cite{Asmussen+Hering:1983}. Recall the notation $W_n=W_n(1)$ and
$W_\infty=W_\infty(1)$. We only treat the upper limit.
Investigating $W_n-W_\infty$ rather than $W_\infty-W_n$
immediately gives the result for the lower limit. Also, without
loss of generality we assume in what follows that
$\Prob[\mathcal{S}]=1$ (otherwise we have to use Lemma \ref{asmus}
below with the probability measure $\Prob$ replaced with
$\Prob(\cdot|\mathcal{S})$ and write ``a.s.\ on the survival set
$\mathcal S$'' rather than ``a.s.'' throughout). This assumption
ensures that $W_\infty$ and $W_\infty(2)$ are positive a.s.\
rather than with positive probability.

We shall use the following representations
\begin{equation}\label{repr1}
W_\infty - W_n=\sum_{|u|=n}Y_u (W_\infty^{(u)}-1)
\quad\text{and}\quad
W_{n+r} - W_n=\sum_{|u|=n} Y_u (W_r^{(u)}-1)
\end{equation}
for $r\in\mn$. By the reasons that will become clear in a while we
first consider the sums as above with truncated summands. It will
be convenient to write $\eee^a$ for $m(2)^{-1/2}$. For  $u\in\V$
with $|u|=n \in \mn_0$ and $r\in\mn_\infty:=\mn\cup\{\infty\}$,
put
$$
Z^{(u)}_{n,r} := Y_u(W_r^{(u)}-1)\1_{\{\eee^{an} Y_u|W_r^{(u)}-1|\leq 1\}}
$$
and then
\begin{equation}\label{repr2}
V_{n,r}=\sum_{|u|=n}(Z^{(u)}_{n,r}-\me[Z^{(u)}_{n,r}|\mathcal{F}_n]).
\end{equation}
\begin{lem}
For $r\in\mn_\infty$,
\begin{equation}\label{important}
\lin \eee^{2an}\Var [V_{n,r}|\mathcal{F}_n] = W_\infty(2)\Var W_r \quad\text{a.s.}
\end{equation}
\end{lem}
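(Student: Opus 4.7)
The plan is to use conditional independence to decompose $\Var[V_{n,r}|\mathcal{F}_n]$ into a sum of i.i.d.-like contributions and then identify its limit via three standard ingredients. First I would observe that given $\mathcal{F}_n$ the values $Y_u$ with $|u|=n$ are constants, whereas the copies $W_r^{(u)}$, being built from disjoint subtrees rooted at the $n$th generation, form an independent family of distributional copies of $W_r$ that are independent of $\mathcal{F}_n$. Consequently the summands in~\eqref{repr2} are conditionally independent and centered, whence
$$
\Var[V_{n,r}|\mathcal{F}_n]=\sum_{|u|=n}\Var[Z^{(u)}_{n,r}|\mathcal{F}_n]=\sum_{|u|=n}Y_u^{2}\,\phi_r(\eee^{-an}/Y_u),
$$
where
$$
\phi_r(A):=\Var\bigl[(W_r-1)\1_{\{|W_r-1|\leq A\}}\bigr],\qquad A>0.
$$
Multiplying by $\eee^{2an}=(m(2))^{-n}$ yields the working representation
$$
\eee^{2an}\Var[V_{n,r}|\mathcal{F}_n]=\sum_{|u|=n}\frac{Y_u^{2}}{(m(2))^n}\,\phi_r(\eee^{-an}/Y_u).
$$

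Next I would assemble three ingredients. \emph{(i)} Since $\Var W_r<\infty$ for every $r\in\mn_\infty$ by Proposition~\ref{prop:var_cov}, dominated convergence yields $\phi_r(A)\to\Var W_r$ as $A\to\infty$. \emph{(ii)} Theorem~3 of~\cite{Biggins:1998}, already invoked in the proof of Theorem~\ref{main2_asw}, implies
$$
\sup_{|u|=n}\frac{Y_u^{2}}{(m(2))^n}\toas 0,\qquad\text{equivalently}\qquad \inf_{|u|=n}\frac{\eee^{-an}}{Y_u}\toas+\infty.
$$
\emph{(iii)} Under the hypotheses of the theorem, $(W_n(2))_{n\in\mn_0}$ is uniformly integrable by Remark~\ref{imp}, so
$$
\frac{1}{(m(2))^n}\sum_{|u|=n}Y_u^{2}=W_n(2)\toas W_\infty(2).
$$

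To conclude I would combine~\emph{(i)} and~\emph{(ii)} into the uniform statement $\sup_{|u|=n}|\phi_r(\eee^{-an}/Y_u)-\Var W_r|\toas 0$ and then split
$$
\eee^{2an}\Var[V_{n,r}|\mathcal{F}_n]=W_n(2)\,\Var W_r+\sum_{|u|=n}\frac{Y_u^{2}}{(m(2))^n}\bigl(\phi_r(\eee^{-an}/Y_u)-\Var W_r\bigr).
$$
The error sum is bounded in absolute value by $W_n(2)\cdot\sup_{|u|=n}|\phi_r(\eee^{-an}/Y_u)-\Var W_r|$, which tends to $0$ a.s.\ by~\emph{(ii)} and~\emph{(iii)}, while the main term converges a.s.\ to $W_\infty(2)\Var W_r$ by~\emph{(iii)}. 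The one non-routine ingredient is the uniform smallness~\emph{(ii)}; once the Biggins minimal-displacement bound is available, the case $r=\infty$ is handled on the same footing because $\Var W_\infty<\infty$, again by Proposition~\ref{prop:var_cov}.
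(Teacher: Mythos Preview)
Your proof is correct and follows essentially the same route as the paper: both arguments rest on the conditional independence of the $Z^{(u)}_{n,r}$, the supremum bound $\eee^{an}\sup_{|u|=n}Y_u\to 0$ from \cite{Biggins:1998}, the convergence $W_n(2)\to W_\infty(2)$, and the finiteness of $\Var W_r$ from Proposition~\ref{prop:var_cov}. The only cosmetic difference is that the paper splits $\Var[V_{n,r}|\mathcal{F}_n]$ into $T_{n,r}'=\sum_{|u|=n}\me[(Z^{(u)}_{n,r})^2|\mathcal{F}_n]$ and $T_{n,r}''=\sum_{|u|=n}(\me[Z^{(u)}_{n,r}|\mathcal{F}_n])^2$ and treats the two pieces separately, whereas you bundle both into the single function $\phi_r(A)=\Var[(W_r-1)\1_{\{|W_r-1|\le A\}}]$ and pass to the limit in one stroke; this is a slightly cleaner packaging of the same computation.
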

\begin{proof}
Conditionally on $\F_n$, the random variables $Z_{n,r}^{(u)}$,
$|u|=n$, are independent (but not identically distributed). By
definition of $V_{n,r}$ we have
$$
\Var [V_{n,r}|\mathcal{F}_n]
=
\sum_{|u|=n} \me \big[(Z_{n,r}^{(u)})^2|\F_n\big] - \sum_{|u|=n} \big(\me \big[Z_{n,r}^{(u)}|\F_n\big]\big)^2
=: T_{n,r}' -T_{n,r}''.
$$
To verify~\eqref{important}, we are going to show that
\begin{align}
&\lin \eee^{2an} T_{n,r}' = W_\infty(2)\Var W_r \quad\text{a.s.},\label{tech1}\\
&\lin \eee^{2an} T_{n,r}''=0 \quad\text{a.s.} \label{tech2}
\end{align}
\noindent
\emph{Proof of~\eqref{tech1}.}
Let $F_r(x):=\Prob\{|W_r-1|\leq x\}$, $x\geq 0$, be the distribution function of $|W_r-1|$. With this notation, we have
$$
T_{n,r}' := \sum_{|u|=n} \me \big[(Z_{n,r}^{(u)})^2|\F_n\big] =
\sum_{|u|=n} \Bigg(Y_u^2 \int_{[0,\,\eee^{-an}Y_u^{-1}]}x^2{\rm
d}F_r(x)\Bigg)
$$
and thereupon
\begin{equation}\label{eq:wspom}
\left(\sum_{|u|=n}Y_u^2\right) \int_{[0,\,
(\eee^{an}\underset{|u|=n}{\sup} Y_u)^{-1}]}x^2{\rm d}F_r(x) \leq
T_{n,r}' \leq \left(\sum_{|u|=n} Y_u^2\right) \Var W_r.
\end{equation}
By Theorem 3 in \cite{Biggins:1998}, the assumption $m(2)<\infty$ alone ensures that
\begin{equation}\label{sup_Y_u_est}
\lin \eee^{an} \underset{|u|=n}{\sup}Y_u=0 \quad \text{a.s.}
\end{equation}
Thus, the integral in the lower estimate in~\eqref{eq:wspom}
converges a.s.\ to $\Var W_r$. To complete the proof
of~\eqref{tech1} we recall that
\begin{equation}\label{eq:Y_u_square}
\lin \eee^{2an} \sum_{|u|=n} Y_u^2 = W_\infty(2) \quad \text{a.s}.
\end{equation}

\noindent \emph{Proof of~\eqref{tech2}.} Since $\me
[W_r^{(u)}-1]=0$,
\begin{align*}
T_{n,r}''
&= \sum_{|u|=n} Y_u^2 \big(\me \big[(W_r^{(u)}-1) \1_{\{\eee^{an} Y_u|W_r^{(u)}-1|\leq 1\}} \big]\big)^2\\
&= \sum_{|u|=n} Y_u^2 \big(\me \big[(W_r^{(u)}-1) \1_{\{\eee^{an} Y_u|W_r^{(u)}-1| >  1\}} \big]\big)^2.
\end{align*}
Using $W_r^{(u)}-1\leq |W_r^{(u)}-1|$ gives
\begin{align*}
T_{n,r}''
&\leq
\sum_{|u|=n} \Bigg(Y_u^2 \Bigg(\int_{\eee^{-an}Y_u^{-1}}^\infty x {\rm d} F_r(x)\Bigg)^2\Bigg)\\
&\leq
\Bigg(\sum_{|u|=n} Y_u^2\Bigg) \Bigg(\int_{(\eee^{an} \underset{|u|=n} \sup Y_u)^{-1}}^{\infty} x {\rm d} F_r(x)\Bigg)^2.
\end{align*}
Since $\int_{0}^{\infty} x {\rm d} F_r(x)$ is finite, the integral
on the right-hand side converges a.s.\ to $0$ as $n\to\infty$
by~\eqref{sup_Y_u_est}. Recalling~\eqref{eq:Y_u_square}, we arrive
at ~\eqref{tech2}. Taken together, \eqref{tech1} and~\eqref{tech2}
yield \eqref{important}.
\end{proof}

The main tool in the proof of Theorem~\ref{main1} is the following lemma, see Proposition 7.2 on p.~436 in \cite{Asmussen+Hering:1983}.
\begin{lem}\label{asmus}
Let $(\mathcal{G}_n)_{n\in\mn_0}$ be an increasing sequence of $\sigma$-fields and $(T_n)_{n\in\mn_0}$ be a sequence of random variables such that \begin{equation}\label{eq:cond_berry_esseen}
\sum_{n\geq 0}\underset{y\in\mr}{\sup} \left|\Prob [T_n\leq y|\mathcal{G}_n] - \Phi(y)\right|<\infty \quad\text{a.s.},
\end{equation}
where $\Phi(y)=\frac 1 {\sqrt{2\pi}} \int_{-\infty}^y \eee^{-x^2/2}{\rm d}x$, $y\in\mr$.
Then,
$$
\underset{n\to\infty}{\lim\sup} \frac {T_n}{\sqrt{2\log n}} \leq 1 \quad \text{a.s.}
$$
If, further, there is a $k\in\mn$ such that $T_n$ is $\mathcal{G}_{n+k}$-measurable for each $n\in\mn_0$, then
$$
\underset{n\to\infty}{\lim\sup} \frac {T_n}{\sqrt{2\log n}}=1 \quad \text{a.s.}
$$
\end{lem}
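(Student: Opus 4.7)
\textbf{Proof proposal for Lemma \ref{asmus}.}
The plan is to reduce the two statements to conditional forms of the first and second Borel--Cantelli lemmas, applied with the thresholds $y_n = (1\pm\delta)\sqrt{2\log n}$ together with the Gaussian tail asymptotics $1-\Phi(y) = (y\sqrt{2\pi})^{-1}\eee^{-y^2/2}(1+o(1))$ as $y\to\infty$. Write $\eps_n := \sup_{y\in\mr}|\Prob[T_n\leq y|\mathcal{G}_n]-\Phi(y)|$, so that by hypothesis $\sum_n \eps_n < \infty$ a.s. Everything is driven by the two-sided sandwich
$$
(1-\Phi(y))-\eps_n \;\leq\; \Prob[T_n > y \,|\, \mathcal{G}_n] \;\leq\; (1-\Phi(y))+\eps_n.
$$

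For the upper bound, fix $\delta>0$ and $y_n=(1+\delta)\sqrt{2\log n}$, so that $1-\Phi(y_n) = O(n^{-(1+\delta)^2}/\sqrt{\log n})$ is summable. The key technical point is that $\sum\eps_n$ is only finite a.s., not in $L^1$, so one cannot directly take expectations. I would localize via the $\mathcal{G}_n$-stopping time $\tau_M := \inf\{n : \sum_{m\leq n}\eps_m > M\}$ (the requisite measurability of $\eps_n$ follows by restricting the defining supremum to rationals) and analyse the truncated events $B_n := \{T_n>y_n\} \cap \{\tau_M>n\}$. Since $\{\tau_M > n\} \in \mathcal{G}_n$, conditioning on $\mathcal{G}_n$ and summing yields
$$
\sum_n \Prob[B_n] \;\leq\; \sum_n (1-\Phi(y_n)) + \me\left[\sum_n \eps_n\1_{\{\tau_M>n\}}\right] \;\leq\; \sum_n (1-\Phi(y_n)) + M < \infty
$$
by definition of $\tau_M$. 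The classical first Borel--Cantelli lemma then gives $\Prob[B_n\text{ i.o.}]=0$, and since $\Prob[\tau_M=\infty]\uparrow 1$ while $B_n = \{T_n>y_n\}$ on $\{\tau_M=\infty\}$, one concludes $\Prob[T_n>y_n\text{ i.o.}]=0$; letting $\delta$ decrease through a countable sequence gives $\limsup_{n\to\infty} T_n/\sqrt{2\log n}\leq 1$ a.s.

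For the lower bound, assuming $T_n\in\mathcal{G}_{n+k}$, I would choose $\delta\in(0,1)$ and $y_n=(1-\delta)\sqrt{2\log n}$, for which $1-\Phi(y_n) \asymp n^{-(1-\delta)^2}/\sqrt{\log n}$ has divergent series, while $\eps_n\to 0$ a.s.\ as the tail of a convergent series; hence $\sum_n\Prob[T_n>y_n|\mathcal{G}_n]=\infty$ a.s. To restore adaptedness, partition $\mn_0$ into the $k$ residue classes modulo $k$: for each $i\in\{0,\ldots,k-1\}$ the events $A^{(i)}_j := \{T_{jk+i}>y_{jk+i}\}$ lie in $\mathcal{H}_j^{(i)} := \mathcal{G}_{(j+1)k+i}$ and $\mathcal{G}_{jk+i} \subseteq \mathcal{H}_{j-1}^{(i)}$, so
$$
\sum_j \Prob\bigl[A_j^{(i)}\bigm|\mathcal{H}_{j-1}^{(i)}\bigr] \;\geq\; \sum_j \Prob\bigl[A_j^{(i)}\bigm|\mathcal{G}_{jk+i}\bigr],
$$
and the right-hand sum diverges a.s.\ for at least one (in fact each) residue class $i$. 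L\'evy's extension of the second Borel--Cantelli lemma then forces $A_j^{(i)}$ infinitely often a.s.\ along that progression, whence $\limsup_{n\to\infty} T_n/\sqrt{2\log n}\geq 1-\delta$ a.s.; letting $\delta\downarrow 0$ along a countable sequence completes the proof.

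The main obstacle is the upper bound, specifically transferring the a.s.\ convergence of $\sum\eps_n$ to a statement that survives taking expectations; the stopping-time localization is the natural remedy, and it is what forces the slightly indirect two-stage argument (first prove $\Prob[B_n\text{ i.o.}]=0$ for each $M$, then let $M\to\infty$). The lower bound is routine once the mod-$k$ partition is used to restore adaptedness of the threshold-crossing events.
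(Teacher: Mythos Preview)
The paper does not prove this lemma; it is quoted verbatim from Asmussen and Hering, \emph{Branching Processes} (1983), Proposition~7.2 on p.~436, and used as a black box. Your argument is the standard one and is essentially what lies behind that proposition: a conditional first Borel--Cantelli for the upper bound (with the stopping-time localization to handle the fact that $\sum_n\eps_n$ is only a.s.\ finite), and L\'evy's conditional second Borel--Cantelli along arithmetic progressions modulo $k$ for the lower bound.

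One small correction in your lower-bound paragraph: you write $\mathcal{G}_{jk+i}\subseteq\mathcal{H}_{j-1}^{(i)}$ and then claim an inequality $\Prob[A_j^{(i)}\mid\mathcal{H}_{j-1}^{(i)}]\geq\Prob[A_j^{(i)}\mid\mathcal{G}_{jk+i}]$. Conditional probabilities are \emph{not} monotone in the conditioning $\sigma$-field, so this step as written is not valid. However, with your own definition $\mathcal{H}_j^{(i)}=\mathcal{G}_{(j+1)k+i}$ one has $\mathcal{H}_{j-1}^{(i)}=\mathcal{G}_{jk+i}$ exactly, so the two conditional probabilities coincide and no inequality is needed; the application of L\'evy's lemma then goes through directly. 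With that adjustment the proof is complete.
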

Let $r\in\mn_\infty$ be fixed. We are going to verify condition~\eqref{eq:cond_berry_esseen} for the random variables
$$
T_n := V_{n,r}/ \sqrt{\Var [V_{n,r}|\mathcal{F}_n]}.
$$
Conditionally given $\mathcal{F}_n$, $V_{n,r}$ is a weighted sum of i.i.d.\ random variables to which the Berry--Esseen inequality (see Lemma~\ref{lem_berry_esseen} below) applies:
\begin{eqnarray*}
\Delta_{n,r}
&:=&
\underset{y\in\mr}{\sup}\left|\Prob \left[\frac{V_{n,r}}{\sqrt{\Var [V_{n,r}|\mathcal{F}_n]}}\leq y\Bigg |\mathcal{F}_n\right] - \Phi(y)\right|
\\
&\leq&
C {\sum_{|u|=n} \me\left[\left|Z^{(u)}_{n,r}-\me[Z^{(u)}_{n,r}|\mathcal{F}_n]\right|^3|\mathcal{F}_n\right]\over (\Var[V_{n,r}|\mathcal{F}_n])^{3/2}}
\\
&\leq&
8C {\sum_{|u|=n} \me[|Z^{(u)}_{n,r}|^3|\mathcal{F}_n] \over (\Var[V_{n,r}|\mathcal{F}_n])^{3/2}},
\end{eqnarray*}
where $C>0$ is a finite absolute constant. Now we work towards
proving that
\begin{equation}\label{basic}
\sum_{n\geq 0}\Delta_{n,r}<\infty\quad \text{a.s.}
\end{equation}
which would verify condition~\eqref{eq:cond_berry_esseen}.
Equation~\eqref{important} reveals that \eqref{basic} would hold provided we could prove that $B<\infty$ a.s.,\ where
\begin{align}
B &:= \sum_{n\geq 0}\eee^{3an}\sum_{|u|=n}\me [|Z^{(u)}_{n,r}|^3|\mathcal{F}_n] \label{basic2}\\
&= \sum_{n\geq 0}\eee^{3an} \sum_{|u|=n}Y_u^3 \int_{[0,\infty)}x^3 \1_{\{\eee^{-an}Y_u^{-1}\geq x\}}{\rm d} F_r(x). \notag
\end{align}

To proceed, we need to define the random walk associated with the BRW. Consider the following probability measures on $\R$:
\begin{equation*}    \label{Sigman}
\overline{\Sigma}_n := \me \Bigg[\sum_{|u|=n} Y_u \, \delta_{S(u)}\Bigg],
\qquad n\in\mn.
\end{equation*}
The associated random walk $(S_n)_{n \in\mn_0}$ is a zero-delayed
random walk with increment distribution $\overline{\Sigma}_1$. It
is clear that, for any measurable $f: \R \to [0,\infty)$,
\begin{equation}    \label{eq:distribution_of_S_n}
\me f(S_n)  =
\me \Bigg[\sum_{|u|=n} Y_u
f(S(u))\Bigg],\quad n\in\mn.
\end{equation}
Passing to expectations in~\eqref{basic2} and
using~\eqref{eq:distribution_of_S_n} we obtain
\begin{eqnarray*}
\me B
&=&
\int_{[0,\infty)} x^3 \left(\sum_{n\geq 0}\eee^{an}\me\Big[\eee^{-2(S_n-an)}\1_{\{\eee^{S_n-an}>x\}}\Big]\right) {\rm d}F_r(x)
\\
&=&
\int_{[0,\infty)}x^3 \left(\int_x^\infty y^{-2}{\rm d}V(y) \right) {\rm d}F_r(x),
\end{eqnarray*}
where
\begin{equation}\label{vx}
V(x):=\sum_{n\geq 0}\eee^{an}\Prob\{S_n-an\leq \log x\},\quad x>0.
\end{equation}
By Lemma~\ref{Lem:asymptotics_of_V_a}, $V(x)<\infty$ for all $x>0$. Since the function $x\mapsto \int_x^\infty y^{-2}{\rm d}V(y)$ is nonincreasing we conclude, again by Lemma \ref{Lem:asymptotics_of_V_a}, that $\int_x^\infty y^{-2}{\rm d}V(y)\leq c/x$ for some constant $c>0$ and large enough $x$. Hence $\int_{(b,\infty)}x^3 \int_x^\infty y^{-2}{\rm d}V(y){\rm d}F_r(x)<\infty$ for any $b>0$ in view of
\begin{equation}\label{useful}
\Var W_r=\int_{[0,\infty)}x^2{\rm d}F_r(x)<\infty.
\end{equation}
We also have $\int_{[0,b]}x^3 \int_x^\infty y^{-2}{\rm d}V(y){\rm d}F_r(x)<\infty$ because $\underset{x\to 0+}{\lim} x^3\int_x^\infty y^{-2}{\rm d}V(y)=0$. To verify the latter relation, integrate by parts and apply L'H\^{o}spital's rule.
This proves that $B<\infty$ a.s.\ and thereupon~\eqref{basic}.

An appeal to Lemma \ref{asmus} with $T_n = V_{n,r}/ \sqrt{\Var [V_{n,r}|\mathcal{F}_n]}$ in combination with \eqref{important} leads to the conclusion: for fixed $r\in\mn$,
\begin{equation}\label{1}
\underset{n\to\infty}{\lim\sup}{\eee^{an} V_{n,r}\over\sqrt{2\log n }}=\sqrt{W_\infty(2)\Var W_r}\quad\text{a.s.}
\end{equation}
because $V_{n,r}$ is $\mathcal{F}_{n+r}$-measurable; whereas
\begin{equation}\label{2}
\underset{n\to\infty}{\lim\sup}{\eee^{an}V_{n,\infty}\over\sqrt{2\log n }}\leq \sqrt{W_\infty(2)\Var W_\infty}\quad\text{a.s.}
\end{equation}
Comparing formulae \eqref{repr1} and \eqref{repr2} we conclude that in order to show that \eqref{1} and \eqref{2} imply
\begin{equation}\label{3}
\underset{n\to\infty}{\lim\sup}{\eee^{an}(W_{n+r}-W_n)\over\sqrt{2\log n }}=\sqrt{W_\infty(2)\Var W_r}\quad\text{a.s.}
\end{equation}
and
\begin{equation}\label{4}
\underset{n\to\infty}{\lim\sup}{\eee^{an}(W_\infty-W_n) \over\sqrt{2\log n }}\leq \sqrt{W_\infty(2)\Var W_\infty}\quad\text{a.s.}
\end{equation}
it suffices to prove that, for $r\in\mn_\infty$,
\begin{equation}\label{eq:rel1}
\lin \eee^{an} \sum_{|u|=n}Y_u|W_r^{(u)}-1|\1_{\{\eee^{an} Y_u|W_r^{(u)}-1| > 1\}}=0 \quad\text{a.s.}
\end{equation}
and
\begin{equation}\label{eq:rel2}
\lin \eee^{an}\sum_{|u|=n}|\me [Z^{(u)}_{n,r}|\mathcal{F}_n]|=
0\quad\text{a.s.}
\end{equation}
Since $\me [W_r^{(u)}-1]=0$ and $Y_u$ is $\F_n$-measurable for $|u|=n$, we have
\begin{eqnarray*}
\big|\me[Z^{(u)}_{n,r}|\mathcal{F}_n]\big|
&=&
\left|\me \left[Y_u (W_r^{(u)}-1) \1_{\{\eee^{an}Y_u|W_r^{(u)}-1|\leq 1\}} \Big | \mathcal{F}_n\right]\right|
\\
&=&
\left|\me \left[Y_u (W_r^{(u)}-1) \1_{\{\eee^{an}Y_u|W_r^{(u)}-1|> 1\}} \Big|\mathcal{F}_n\right]\right|
\\
&\leq&
\me \left[ Y_u |W_r^{(u)}-1| \1_{\{\eee^{an}Y_u|W_r^{(u)}-1|> 1\}} \Big | \mathcal{F}_n\right].
\end{eqnarray*}
Hence both relations~\eqref{eq:rel1} and~\eqref{eq:rel2} follow if we can show that
$$
I:=
\me\Bigg[\sum_{n\geq 0}\eee^{an}\sum_{|u|=n}Y_u|W_r^{(u)}-1|\1_{\{\eee^{an}Y_u|W_r^{(u)}-1|>1\}}\Bigg]<\infty.
$$
Since $V$ is nondecreasing, an application of Lemma \ref{Lem:asymptotics_of_V_a} yields $V(x)\leq cx$ for some constant $c>0$ and large enough $x$. Using this we infer
$$
I=
\me \Bigg[\sum_{n\geq 0}\eee^{an}\sum_{|u|=n} Y_u\int_{[0,\infty)} x\1_{\{\eee^{-an}Y_u^{-1}\leq x\}}{\rm d}F_r(x)\Bigg]
=
\int_{[0,\infty)}x V(x){\rm d}F_r(x)
<\infty
$$
in view of \eqref{useful}. The proof of \eqref{3} and \eqref{4} is complete.

It remains to show that ``$\leq$'' can be replaced by ``$=$'' in~\eqref{4}. As has already been remarked at the beginning of the proof, once we have proved \eqref{4} we also have
\begin{equation}\label{5}
\underset{n\to\infty}{\lim\inf}{\eee^{an}(W_\infty-W_n) \over\sqrt{2\log n }}\geq -\sqrt{W_\infty(2)\Var W_\infty}\quad\text{a.s.}
\end{equation}
For any $r\in\mn$, the following equality holds $${\eee^{an}(W_\infty-W_n)\over \sqrt{\log n}}={\eee^{a(n+r)}(W_\infty-W_{n+r})\over \sqrt{\log (n+r)}}{\sqrt{\log(n+r)}\over \sqrt{\log n}}\eee^{-ar}+{\eee^{an}(W_{n+r}-W_n)\over \sqrt{\log n}}.$$ Using now \eqref{3} and \eqref{5} we infer
\begin{align*}
\lefteqn{\underset{n\to\infty}{\lim\sup}{\eee^{an}(W_\infty-W_n)\over \sqrt{\log n}}}\\
&\geq
\underset{n\to\infty}{\lim\inf} {\eee^{a(n+r)}(W_\infty-W_{n+r})\over \sqrt{\log (n+r)}}{\sqrt{\log(n+r)}\over \sqrt{\log n}}\eee^{-ar}+\underset{n\to\infty}{\lim\sup}{\eee^{an}(W_{n+r}-W_n)\over\sqrt{\log n}}\\
&\geq -\sqrt{2W_\infty(2)\Var W_\infty}\eee^{-ar}+\sqrt{2W_\infty(2)\Var W_r}.
\end{align*}
Letting $r\to\infty$ we arrive at
$$
\underset{n\to\infty}{\lim\sup}{\eee^{an}(W_\infty-W_n)\over \sqrt{\log n}}\geq \sqrt{2W_\infty(2)\Var W_\infty}.
$$
This completes the proof of Theorem \ref{main1}.

\section{Appendix}

The following result is concerned with the asymptotics of $V(x)$ defined in \eqref{vx}. This is a slightly extended specialization of Lemma 3.1 in \cite{Iksanov+Meiners:2010}.

\begin{lem}   \label{Lem:asymptotics_of_V_a}
Suppose that the function $r\to (m(r))^{1/r}$ decreases on $[1,2]$ and that~\eqref{mean} holds.
Then $V(x)<\infty$ for all $x>0$.  If, furthermore, the associated random walk $(S_n)_{n\in\mn_0}$ is non-arithmetic, then as $x \to \infty$,
\begin{equation}    \label{eq:asymptotic_of_V_a}
V(x)~\sim~ c_a x,
\end{equation}
where $c_a:=(\eee^{2a}(-m'(2))-a)^{-1}\in (0,\infty)$,
and
\begin{equation}
\label{eq:tail_asymptotics_of_V_a}
\int_{(x,\,\infty)} \!\! y^{-2} \, {\rm d}V(y) ~\sim~
c_a x^{-1}.
\end{equation}
If
$(S_n-an)_{n \in\mn_0}$ is arithmetic with span $\lambda_a > 0$, then,
analogously, as $n \to \infty$
\begin{eqnarray}    \label{eq:asymptotic_of_V_a_lattice}
V(\eee^{\lambda_a n})~ \sim~ d_a \eee^{\lambda_a n},
\end{eqnarray}
where $d_a:=\lambda_a((1-\eee^{-\lambda_a})(\eee^{2a}(-m^{\prime}(2)) -a))^{-1}\in (0,\infty)$,
and
\begin{equation}
\label{eq:tail_asymptotics_of_V_a_lattice}
\int_{[\eee^{\lambda_a n},\,\infty)} \!\! y^{-2}{\rm d}V(y) ~\sim~ d_a \eee^{-\lambda_a n}.
\end{equation}
\end{lem}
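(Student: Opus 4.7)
The plan is to identify $V(x)$ with the (weighted) renewal measure of a suitably tilted random walk and then apply Blackwell's theorem or, more generally, the key renewal theorem. Observe first that the associated random walk has one-step Laplace transform
$$
\me e^{-\theta S_1} = \me\Bigl[\sum_{i=1}^J e^{-X_i}e^{-\theta X_i}\Bigr] = m(\theta+1),
$$
so in particular $\me e^{-S_1} = m(2)$. Set $T_n := S_n - an$, so that $\me e^{-T_1} = e^a m(2) = m(2)^{1/2} = e^{-a}$ and $e^{an - T_n}$ is a mean-one $\F_n$-martingale. I would use it to define a tilted probability measure $\tilde\Prob$ under which $(T_n)_{n\in\mn_0}$ is again a random walk, with one-step Laplace transform $\tilde m(\eta) := \tilde\me e^{-\eta T_1} = e^{(\eta+2)a}m(\eta+2)$.

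The crucial computation is that
$$
\tilde\me[T_1] = e^{2a}(-m'(2)) - a > 0
$$
is exactly a rewriting of hypothesis~\eqref{mean}; equivalently $\tilde m'(0) < 0$, so by convexity there exists $\eta>0$ with $\tilde m(\eta)<1$. The tilt lets one rewrite, for every $y\in\R$, $e^{an}\Prob\{T_n\leq y\} = \tilde\me[e^{T_n}\1_{\{T_n\leq y\}}]$, whence
$$
V(x) = \int_{(-\infty,\,\log x]} e^y\,\tilde U(dy),
$$
where $\tilde U(dy) = \sum_{n\geq 0}\tilde\Prob\{T_n\in dy\}$ is the renewal measure of $(T_n)$ under $\tilde\Prob$. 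The Chernoff bound $\tilde\Prob\{T_n\leq y\}\leq e^{\eta y}\tilde m(\eta)^n$ makes $\tilde U$ locally finite, and combined with the integrand $e^y$ this gives $V(x)<\infty$ for every $x>0$.

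In the non-arithmetic case, substituting $s = y-\log x$ yields
$$
V(x) = x\int_{(-\infty,0]} e^s\,\tilde U(\log x + ds).
$$
The function $s\mapsto e^s\1_{\{s\leq 0\}}$ is directly Riemann integrable with integral $1$, so the key renewal theorem for the positive-drift walk $(T_n)$ gives the integral converging to $1/\tilde\me[T_1] = c_a$, proving~\eqref{eq:asymptotic_of_V_a}. For~\eqref{eq:tail_asymptotics_of_V_a} I would integrate by parts: since $V(y)/y^2\to 0$,
$$
\int_{(x,\infty)} y^{-2}\,dV(y) = -\frac{V(x)}{x^2} + 2\int_x^\infty \frac{V(y)}{y^3}\,dy \;\sim\; -\frac{c_a}{x} + \frac{2c_a}{x} = \frac{c_a}{x}.
$$
In the lattice case with span $\lambda_a$, Blackwell's theorem gives $\tilde U\{k\lambda_a\}\to\lambda_a c_a$ as $k\to\infty$, and summing the geometric series $\sum_{k\leq n} e^{k\lambda_a} \sim e^{n\lambda_a}/(1-e^{-\lambda_a})$ produces the stated constant $d_a$ in both~\eqref{eq:asymptotic_of_V_a_lattice} and~\eqref{eq:tail_asymptotics_of_V_a_lattice}.

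The main obstacle is pinpointing where hypothesis~\eqref{mean} enters: as shown above, it is exactly equivalent to the positive-drift condition $\tilde\me[T_1]>0$, which in turn makes the key renewal theorem applicable to $\tilde U$ and, via the Chernoff bound, yields local finiteness of $\tilde U$. All remaining steps---the identification of the constants $c_a$ and $d_a$, the verification of direct Riemann integrability of $s\mapsto e^s\1_{\{s\leq 0\}}$, and the integration by parts for the tail---are routine.
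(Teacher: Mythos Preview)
Your approach via exponential tilting and the key renewal theorem is correct and is almost certainly what lies behind Lemma~3.1 of \cite{Iksanov+Meiners:2010}, which the paper simply cites for~\eqref{eq:asymptotic_of_V_a} and~\eqref{eq:asymptotic_of_V_a_lattice}. The paper's own proof is four lines long: it borrows the asymptotics of $V$ from that reference, obtains~\eqref{eq:tail_asymptotics_of_V_a} from~\eqref{eq:asymptotic_of_V_a} by integration by parts together with Proposition~1.5.10 of \cite{Bingham+Goldie+Teugels:1989}, and handles~\eqref{eq:tail_asymptotics_of_V_a_lattice} by the increment relation $V(\eee^{\lambda_a n})-V(\eee^{\lambda_a(n-1)})\sim d_a(1-\eee^{-\lambda_a})\eee^{\lambda_a n}$. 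So you are supplying the argument the paper outsources; your integration-by-parts step for the tail matches the paper's exactly.

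One small gap: your Chernoff argument for local finiteness of $\tilde U$ requires $\tilde m(\eta)<1$ for some $\eta>0$, hence $m(2+\eta)<\infty$, which is \emph{not} assumed (the hypotheses only give $m$ finite on $[1,2]$). This is easily repaired. Finiteness of $m$ on $[1,2]$ and convexity force the left derivative $m'(2-)$ to be finite, and a monotone-convergence argument then shows $\me\bigl[\sum_i |X_i|\eee^{-2X_i}\bigr]<\infty$, so that $\tilde\me|T_1|<\infty$ and $\tilde\me T_1=\eee^{2a}(-m'(2))-a>0$ is a genuine expectation. Positive drift alone gives $\tilde U((-\infty,y])<\infty$ for every $y$ (standard renewal theory), and then $V(x)=\int_{(-\infty,\log x]}\eee^{y}\,\tilde U({\rm d}y)\le x\,\tilde U((-\infty,\log x])<\infty$. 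With this fix the key renewal theorem applies as you describe, and the rest of your argument goes through.
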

\begin{proof}
Formulae \eqref{eq:asymptotic_of_V_a} and \eqref{eq:asymptotic_of_V_a_lattice} are borrowed from Lemma 3.1 in \cite{Iksanov+Meiners:2010}.
Relation \eqref{eq:tail_asymptotics_of_V_a} follows from \eqref{eq:asymptotic_of_V_a} by integration by parts and subsequent application of Proposition 1.5.10 of \cite{Bingham+Goldie+Teugels:1989}. Relation \eqref{eq:tail_asymptotics_of_V_a_lattice} can be obtained with the help of elementary calculations in combination with $V(\eee^{\lambda_a n})-V(\eee^{\lambda_a(n-1)})~ \sim~ d_a(1-\eee^{-\lambda_a}) \eee^{\lambda_a n}$ which is a consequence of \eqref{eq:asymptotic_of_V_a_lattice}.
\end{proof}

Since we consider BRW in which particles are allowed to have infinite number of offspring with positive probability, we need a version of the Berry--Esseen inequality for sums with possibly infinite number of summands.
\begin{lem}\label{lem_berry_esseen}
Let $X_1, X_2,\ldots$ be independent (but not identically distributed) random variables with $\me X_i =0$, $\sigma^2_i := \Var X_i$ and $\rho_i := \me|X_i|^3$, $i\in\mn$. If $\sum_{i\geq 1} \sigma_i^2<\infty$, then, for an absolute constant $C$,
\begin{equation}\label{eq:berry_esseen}
\sup_{y\in\R} \left|\Prob\left[\frac{\sum_{i\geq 1} X_i}{\left(\sum_{i\geq 1} \sigma_i^2\right)^{1/2}}\leq y\right] - \frac 1 {\sqrt{2\pi}} \int_{-\infty}^y \eee^{-x^2/2}{\rm d}x \right| \leq C \frac{\sum_{i\geq 1} \rho_i} {\left(\sum_{i\geq 1} \sigma_i^2\right)^{3/2}}.
\end{equation}
\end{lem}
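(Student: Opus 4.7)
The plan is to reduce the infinite-sum case to the classical (finite) Berry--Esseen inequality by truncating the series and passing to the limit. First I would dispose of trivial cases: if $\sum_{i\geq 1}\rho_i=+\infty$, the right-hand side of~\eqref{eq:berry_esseen} is infinite and there is nothing to prove; and if $s^2:=\sum_{i\geq 1}\sigma_i^2=0$, every $X_i$ vanishes a.s.\ and the claim is vacuous. So assume $0<s^2<\infty$ and $\sum_{i\geq 1}\rho_i<\infty$. By Kolmogorov's convergence theorem, since $\sum_{i\geq 1}\sigma_i^2<\infty$, the series $S:=\sum_{i\geq 1}X_i$ converges both a.s.\ and in $L^2$, with variance $s^2$.

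Next, for each $n\in\mn$, set $S_n:=\sum_{i=1}^n X_i$ and $s_n^2:=\sum_{i=1}^n \sigma_i^2$. Apply the classical Berry--Esseen inequality for independent (not necessarily i.i.d.)\ summands to obtain
\begin{equation*}
\sup_{y\in\R}\left|\Prob\left[\frac{S_n}{s_n}\leq y\right]-\Phi(y)\right|\leq C\,\frac{\sum_{i=1}^n \rho_i}{s_n^3}
\end{equation*}
with the same absolute constant $C$. Since $s_n\to s>0$ and $\sum_{i=1}^n\rho_i\to\sum_{i\geq 1}\rho_i$, the right-hand side converges to $C\bigl(\sum_{i\geq 1}\rho_i\bigr)/s^3$.

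It remains to transfer the bound to the limit variable $S/s$. Because $S_n\to S$ a.s.\ and $s_n\to s$, Slutsky's lemma gives $S_n/s_n\todistr S/s$, so the CDF $F$ of $S/s$ satisfies $F_n(y)\to F(y)$ at every continuity point $y$ of $F$, where $F_n$ is the CDF of $S_n/s_n$. Fix such a continuity point $y$; then
\begin{equation*}
|F(y)-\Phi(y)|=\lim_{n\to\infty}|F_n(y)-\Phi(y)|\leq\lim_{n\to\infty}C\,\frac{\sum_{i=1}^n \rho_i}{s_n^3}=C\,\frac{\sum_{i\geq 1}\rho_i}{s^3}.
\end{equation*}
To extend the bound to every $y\in\R$, I would use that $\Phi$ is continuous and $F$ is right-continuous with at most countably many discontinuities: approximating a discontinuity $y_0$ of $F$ by continuity points $y_k\downarrow y_0$ gives $|F(y_0)-\Phi(y_0)|=\lim_k|F(y_k)-\Phi(y_k)|$, and the same bound is inherited. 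Taking the supremum over $y$ yields~\eqref{eq:berry_esseen}.

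There is no real obstacle here; the only thing to be slightly careful about is invoking the classical Berry--Esseen inequality in its non-i.i.d.\ form, and then handling discontinuities of $F$ when passing from convergence at continuity points to a uniform bound, which the continuity of $\Phi$ makes straightforward.
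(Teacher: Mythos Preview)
Your proof is correct and follows essentially the same approach as the paper's: apply the classical finite-$n$ Berry--Esseen inequality, let $n\to\infty$ to obtain the bound at continuity points of the limit distribution via convergence in distribution of $S_n/s_n$ to $S/s$, and then extend to all $y$ by right-continuity of $F$ and continuity of $\Phi$. Your write-up is actually slightly more careful than the paper's (you explicitly handle the trivial cases and invoke Kolmogorov's theorem for the a.s.\ convergence of $\sum X_i$), but the strategy is identical.
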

\begin{proof}
According to the classical Berry--Esseen inequality, \eqref{eq:berry_esseen} is valid if all infinite sums are replaced by finite sums over $i=1,\ldots,n$ with arbitrary $n\in\mn$. By letting in the classical inequality $n\to\infty$ and noting that $\eta_n := \sum_{i=1}^n X_i/ (\sum_{i=1}^{n} \sigma_i^2)^{1/2}$ converges to its infinite version $\eta_\infty$ a.s.\ (and hence in distribution), we obtain that~\eqref{eq:berry_esseen} holds for all $y$ which are continuity points of $\eta_\infty$. Since any $y\in\R$ can be approximated by continuity points from the right and since the distribution function is right-continuous, we obtain that~\eqref{eq:berry_esseen} holds for all $y\in\R$.
\end{proof}

\acks The authors thank the anonymous referee for several
valuable comments that helped improving the presentation.
Z.~Kabluchko is grateful to Pascal Maillard for a useful
discussion. A part of this work was done while A.~Iksanov was
visiting M\"{u}nster in July 2015. A.I. gratefully acknowledges
hospitality and the financial support by DFG SFB 878 ``Geometry,
Groups and Actions''.

\end{document}